\def\calA{\mathcal{A}}  \def\calC{\mathcal{C}} 
  \def\calG{\mathcal{G}}
 \def\frakF{\mathfrak{F}}
 \def\bfB{\mathbf{B}}
 \def\rmB{\mathrm{B}}
 \def\dbN{\mathbb{N}}
\theoremstyle{plain}
\newtheorem{theorem}{Theorem}[section]
\newtheorem{conjecture}[theorem]{\textcolor{red}{Conjecture}}
\newtheorem{lemma}[theorem]{Lemma}
\newtheorem{proposition}[theorem]{Proposition}
\theoremstyle{definition}
\newtheorem{definition}[theorem]{Definition}
\newtheorem*{notation}{Notation}
\newtheorem*{remark}{Remark}
\newcommand{\Set}[1][]{\ifthenelse{\isempty{#1}}{\mathrm{Set}}{\mathrm{Set}_{\tiny #1}}}
\newcommand{\DDelta}{{\raisebox{-1pt}{\begin{sideways}\ensuremath{\trianglerighteqslant}\end{sideways}}}}
\begin{document}

\title{Cats}
\author{\textsc{Daniel Gerigk}\footnote{\,Universit\"at Bonn, Germany. \quad \href{mailto:danger@uni-bonn.de}{danger@uni-bonn.de}}}
\date{March 7, 2014}
\maketitle

\begin{center} \textsl{To the memory of Christoph S.} \end{center}

\begin{abstract} \noindent
A generalization of the notion of an $\infty$-category is presented, allowing for ($\infty$-)cat(egorie)s that may have non-invertible higher morphisms.\\
The first step is to find a suitable category $\DDelta$ of (generalized) simplices. In fact, the category $\DDelta$ which we will employ has already been introduced a long time ago. Consider $\Set[\DDelta]$. Every simplex $A \in \DDelta$ has \emph{(inner) faces}, corresponding \emph{(inner) horns}, and a \emph{spine}. We call an object $X \in \Set[\DDelta]$ a \emph{cat} if every inner horn in $X$ can be filled. We conjecture that every spine is \emph{inner anodyne}, and that the Cisinski model structure generated by the set of spines is equal to the Cisinski model structure generated by the set of inner horns. It is conjectured that the fibrant objects of this model structure are precisely the cats.
\end{abstract}

\section{Preface}

The notion of an \emph{$\infty$-category} was first defined by Boardman \& Vogt \cite{boardmanvogt} (who call them \emph{weak Kan complexes}) and was further developed most notably by Joyal \cite{joyal.qcakc,joyal.qcvsss,joyal.noqc,joyal.ttoqcaia} (who calls them \emph{quasi-categories}) and later by Lurie \cite{lurie.htt,lurie.ha}.

\begin{notation}
Denote by $\Delta$ the category of (classical) simplices. Define $\dbN:=\{0,1,2,\ldots\}$ and $\dbN_+:=\dbN \setminus \{0\}$.
\end{notation}

\section{The category of (generalized) simplices}

\begin{definition}
Define $\calA:=\prod'_{n \in \dbN_+}\Delta \subset \prod_{n \in \dbN_+}\Delta$ to be the full subcategory generated by the objects which have only finitely many components not equal to $\Delta^0$. For a morphism $f \colon A \to B$ in $\calA$, define $\deg f$ to be the smallest $k \in \dbN_+$ such that $f_k$ is constant, i.\,e. factors through $\Delta^ 0$. For morphisms $f,g \colon A \to B$ in $\calA$, define $f \sim g$ if and only if $\deg f = \deg g =: d$ and $f_1=g_1,\ldots,f_d=g_d$. This defines an equivalence relation on the set of morphisms in $\calA$ which is compatible with composition of morphisms. For $A \in \calA$, define $\dim A$ to be the smallest $d \in \dbN$ such that $A_{d+1} = \Delta^0$. Define $\DDelta \subset \calA/{\sim}$ to be the full subcategory generated by the objects $A=(A_1,A_2,\ldots)$ having the property that $A_n = \Delta^0$ for all $n > \dim A$. 
\end{definition}

The category $\DDelta$ was introduced by Simpson \cite{simpson.acmsfncihnsagsvk} under the notation $\Theta$. 

\begin{notation}
For $n \in \dbN$ and $a_1,\ldots,a_n \in \dbN_+$, define \[\DDelta^{a_1,\ldots,a_n} := (\Delta^{a_1},\ldots,\Delta^{a_n},\Delta^0,\Delta^0,\ldots) \in \DDelta.\]
In particular, $\DDelta^\emptyset = (\Delta^0,\Delta^0,\ldots)$.
\end{notation}

\begin{proposition}
$\DDelta$ is a skeleton of $\calA/{\sim}$, and the objects in $\DDelta$ have no non-identity automorphisms. Two objects $A,B \in \calA$ become isomorphic in $\calA/{\sim}$ if and only if $\dim A =\dim B =: d$ and $A_1 = B_1, \ldots, A_d = B_d$.
\end{proposition}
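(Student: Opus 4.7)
The plan is to prove the third sentence first (in both directions), and then to obtain the skeletal property and the rigidity claim as formal consequences. The only substantive input is the classical fact that the isomorphisms in $\Delta$ are the identities, applied componentwise after the equivalence relation $\sim$ has been unpacked.

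For the forward direction of the characterization, let $f \colon A \to B$ be an isomorphism in $\calA/{\sim}$ with inverse $g$. Unwinding $\sim$, the relation $gf \sim \mathrm{id}_A$ supplies $\deg(gf) = \dim A + 1$ together with the componentwise identities $g_i f_i = \mathrm{id}_{A_i}$ for $i \le \dim A$; symmetrically for $fg \sim \mathrm{id}_B$. Hence for $i \le \min(\dim A, \dim B)$ the maps $f_i$ and $g_i$ are mutually inverse isomorphisms in $\Delta$, forcing $A_i = B_i$ and $f_i = \mathrm{id}$. To rule out $\dim A \neq \dim B$, I would argue by contradiction: assuming $d := \dim A < \dim B$, the equality $A_{d+1} = \Delta^0$ makes $f_{d+1}$ factor through $\Delta^0$, so $\deg f \le d+1$, whence $\deg(fg) \le \deg f \le d+1 < \dim B + 1 = \deg(fg)$, a contradiction.

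The reverse direction, together with essential surjectivity of $\DDelta \hookrightarrow \calA/{\sim}$, is handled by the explicit replacement $A \mapsto \bar A := (A_1,\ldots,A_d,\Delta^0,\Delta^0,\ldots)$ with $d := \dim A$: the morphisms $\alpha \colon A \to \bar A$ and $\beta \colon \bar A \to A$ that act as the identity on the first $d$ components and as the unique maps to/from $\Delta^0$ thereafter both have degree $d+1$, and their composites agree with the appropriate identities on the first $d$ components, hence are $\sim$-equivalent to them. Skeletality is then immediate: any two distinct objects of $\DDelta$ must differ in dimension or in some $A_i$ with $i \le \dim A$, and by the characterization they cannot be isomorphic. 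Rigidity follows by specializing the first argument to the case $A = B \in \DDelta$: any automorphism $f$ satisfies $f_i = \mathrm{id}_{A_i}$ for $i \le \dim A$, and $A_{\dim A + 1} = \Delta^0$ forces $\deg f = \dim A + 1 = \deg \mathrm{id}_A$, so $f \sim \mathrm{id}_A$.

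The main obstacle is really the careful bookkeeping with $\deg$ under composition, in particular the inequality $\deg(fg) \le \min(\deg f, \deg g)$ used in the contradiction step (immediate from the observation that whenever $f_k$ factors through $\Delta^0$ so does $(fg)_k = f_k g_k$, and similarly for $g_k$). Once this and the asserted compatibility of $\sim$ with composition are in hand, the rigidity of $\Delta$ does all of the remaining work.
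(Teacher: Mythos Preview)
Your argument is correct. The paper states this proposition without proof, so there is nothing to compare against; your write-up supplies exactly the routine verification the author left implicit. The key steps --- computing $\deg(\mathrm{id}_A)=\dim A+1$, using the inequality $\deg(fg)\le\min(\deg f,\deg g)$ to force $\dim A=\dim B$, and invoking the rigidity of $\Delta$ componentwise --- are all sound, and the explicit isomorphism $A\cong\bar A$ cleanly handles both the reverse implication and essential surjectivity of $\DDelta\hookrightarrow\calA/{\sim}$.
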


\begin{theorem}[Ara \& Maltsiniotis]
The category $\DDelta$ is a strict test category in the sense of Grothendieck \cite{grothendieck.ps}.
\end{theorem}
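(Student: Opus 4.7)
The plan is to verify Grothendieck's criteria for $\DDelta$ to be a strict test category, namely: (T1) $\DDelta$ is aspherical (its nerve is weakly contractible); (T2) every slice $\DDelta/A$ is aspherical (local asphericity); and (T3) for every $A, B \in \DDelta$, the product presheaf $\DDelta(-,A) \times \DDelta(-,B)$ on $\DDelta$ is aspherical, equivalently, the category of elements $\DDelta/(A \times B)$ is aspherical.

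Condition (T1) is immediate: $\DDelta^\emptyset = (\Delta^0, \Delta^0, \ldots)$ is terminal in $\DDelta$, since any morphism into it is forced to be constant coordinate-wise and the $\sim$-equivalence collapses the resulting morphisms to a single class. Hence $N\DDelta$ is contractible.

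For (T2) and (T3) the approach is induction on $\dim A$, exploiting the recursive structure of $\DDelta$ via the first simplicial coordinate. For $A = \DDelta^{a_1, \ldots, a_n}$, projection to the first coordinate induces a functor $\DDelta/A \to \Delta/\Delta^{a_1}$ whose fiber over an arrow $\Delta^k \to \Delta^{a_1}$ decomposes as a finite product of slices $\DDelta/\DDelta^{b_1, \ldots, b_{n-1}}$ of strictly smaller dimension. Because $\Delta$ is a strict test category, $\Delta/\Delta^{a_1}$ is aspherical; the inductive hypothesis makes the fibers aspherical; Quillen's Theorem~A then yields asphericity of $\DDelta/A$, establishing (T2). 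Criterion (T3) proceeds analogously via the combined first-coordinate projection $\DDelta/(A \times B) \to \Delta/(\Delta^{a_1} \times \Delta^{b_1})$, whose base is aspherical by strict-testness of $\Delta$ and whose fibers reduce to lower-dimensional instances of (T3) for $\DDelta$.

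The main obstacle is (T3): one needs a tractable description of the product $\DDelta(-,A) \times \DDelta(-,B)$ in $\widehat{\DDelta}$. This product does not correspond to a coordinate-wise product in $\calA$ because of the $\sim$-identification and the restricted-product convention, so one must carefully bookkeep Eilenberg--Zilber-type shuffles at the first coordinate together with compatibility data at subsequent coordinates. An attractive alternative is to identify $\DDelta$ with an iterated Berger wreath product $\Delta \wr \Delta \wr \cdots$ and invoke the general theorem that wreath products of strict test categories are again strict test categories; this bypasses direct combinatorial analysis of products but first requires verifying that Simpson's description via equivalence classes in $\calA$ recovers Berger's construction.
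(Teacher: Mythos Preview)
The paper does not prove this theorem at all; it simply records it as a result due to Ara and Maltsiniotis, with no argument given. So there is nothing in the paper to compare your proposal against, and any sketch you produce is already going beyond what the paper does.

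That said, your outline has a genuine gap. Your condition (T2), ``every slice $\DDelta/A$ is aspherical'', is vacuous: for any object $A$ of any small category, the slice over $A$ has the identity $\mathrm{id}_A$ as terminal object and is therefore contractible. The elaborate inductive argument you give for (T2) via projection to the first coordinate and Quillen's Theorem~A is proving something that is true for trivial reasons, and in particular it is \emph{not} the ``local test'' condition in Grothendieck's sense. Being a strict test category means being a test category that is moreover totally aspherical; your (T1) and (T3) together amount to total asphericity, but you have not addressed the test-category part. That requires, for instance, exhibiting a separating aspherical interval in $\widehat{\DDelta}$ (a presheaf $I$ with two disjoint points such that each $I \times A$ is aspherical), or otherwise verifying the local weak test condition on each $\DDelta/A$; neither is trivial, and neither is what you wrote.

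Your closing remark is the most promising line: identifying $\DDelta$ with an iterated wreath product and invoking stability of the strict-test property under wreath product is essentially the strategy used in the literature (Berger, Cisinski--Maltsiniotis, Ara). If you pursue that route, the real work is the identification of Simpson's $\Theta$ as defined here with Berger's wreath construction; this is not automatic, since the quotient by $\sim$ and the restricted-product convention must be matched against Berger's inductive definition.
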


\section{Faces, horns and spines}

\begin{definition}
Let $A \in \DDelta$. The maximal proper subobjects $B \hookrightarrow A$ in $\DDelta$, and also the images of the corresponding monomorphisms $B \hookrightarrow A$ in $\Set[\DDelta]$, are called \emph{faces} of $A$.
\end{definition}

\begin{lemma}
Let $F \colon B \hookrightarrow A$ be a face in $\DDelta$, and define $d := \dim A$. Then $d-1 \leq \dim B \leq d \leq \deg F \leq d+1$, and the components $F_1,\ldots,F_d$ of $F$ are well-defined monomorphisms in $\Delta$. There is a unique $k \in \{1,\ldots,d\}$ such that the monomorphism $F_k \colon B_k \hookrightarrow A_k$ is a (classical) face in $\Delta$, and for $n \in \{1,\ldots,d\} \setminus \{k\}$ the monomorphism $F_n$ is an identity.
\end{lemma}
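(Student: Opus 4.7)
My plan will hinge on three initial observations. First, an equivalence class of morphisms $F\colon B\to A$ in $\calA/{\sim}$ is determined by the pair $(\deg F,(F_1,\ldots,F_{\deg F}))$, so the well-definedness assertion for $F_1,\ldots,F_d$ will reduce to the inequality $d\leq\deg F$. Second, the upper bound $\deg F\leq d+1$ is immediate, since any morphism into $A_{d+1}=\Delta^0$ factors through $\Delta^0$. Third, a monomorphism in $\DDelta$ has componentwise monomorphic behaviour in the range $n\leq\deg F$; I will verify this by a standard argument, using maps out of $\DDelta^\emptyset$ and similarly short test objects to detect failures of injectivity in any single coordinate.

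For the structural claim---that exactly one component $F_k$ is a classical face and the remaining $F_n$ are identities---I will exploit the maximality of $F$ as a proper subobject. Each monomorphism $F_n\colon B_n\hookrightarrow A_n$ in $\Delta$ factors as a composition of classical face operators; assembling these coordinatewise factorizations in a suitable order (smaller indices first, so that intermediate objects avoid a ``middle $\Delta^0$'') produces a chain of strict monomorphisms
\[B=B^{(0)}\hookrightarrow B^{(1)}\hookrightarrow\cdots\hookrightarrow B^{(N)}=A\]
in $\DDelta$ in which each step differs from the previous in exactly one coordinate and by a single elementary face. Maximality of $F$ then forces $N=1$, yielding the unique index $k\in\{1,\ldots,d\}$ and the remaining identities.

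With this normal form in hand, the bounds on $\dim B$ and $\deg F$ follow from a brief case analysis, writing $A=\DDelta^{a_1,\ldots,a_d}$. If $k<d$, or if $k=d$ together with $a_d\geq 2$, then $B_d\neq\Delta^0$, so $\dim B=d$; moreover $F_d$ is non-constant (being either $\mathrm{id}_{A_d}$ or a face with nontrivial source), whence $\deg F=d+1$. In the boundary case $k=d$ and $a_d=1$, the component $F_d\colon\Delta^0\hookrightarrow\Delta^1$ forces $B_d=\Delta^0$ and hence $\dim B=d-1$; then $\deg F=d$, because $F_d$ factors through $\Delta^0$ while each $F_m=\mathrm{id}_{A_m}$ for $m<d$ remains non-constant thanks to $A_m\neq\Delta^0$.

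The main obstacle will be the chain construction in the second paragraph, specifically verifying that each intermediate $B^{(i)}$ genuinely lies in $\DDelta$---i.e.\ that no $\Delta^0$ ever sneaks in below a nontrivial coordinate---and that successive inclusions remain strict in $\DDelta$ rather than collapsing under the equivalence $\sim$. The same interplay between componentwise face factorizations in $\Delta$ and the cutoff built into $\calA/{\sim}$ is what will rule out an apparent ``interior face'' in a coordinate $k<d$ with $a_k=1$: such a candidate subobject factors through a proper intermediate one and therefore cannot be maximal.
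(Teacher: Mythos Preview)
The paper states this lemma without proof, so there is nothing to compare against; I can only assess your plan on its own merits.

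Your overall strategy is sound and would yield a complete proof. The test-object argument for componentwise injectivity works, but note that it must be run \emph{inductively}: to detect a failure of injectivity in coordinate $n$ you need test maps $g,h\colon\DDelta^{1,\ldots,1}\to B$ (with $n-1$ ones) whose first $n-1$ components are non-constant, and to guarantee that $F\circ g$ and $F\circ h$ still have degree $n$ you must already know that $F_1,\ldots,F_{n-1}$ are injective. The same argument at level $n=\deg F$ then forces $B_{\deg F}=\Delta^0$, i.e.\ $\dim B=\deg F-1$, which is the clean characterization of monomorphisms in $\DDelta$ that drives everything else.

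There is one ordering wrinkle you should straighten out. In your first paragraph you correctly note that well-definedness of $F_1,\ldots,F_d$ amounts to $d\leq\deg F$; yet in the chain construction you write ``each monomorphism $F_n\colon B_n\hookrightarrow A_n$'' as if all $F_n$ with $n\leq d$ were already available. If $\deg F<d$ the components $F_{\deg F+1},\ldots,F_d$ are \emph{not} determined by the equivalence class. The cleanest fix is to build the chain from a chosen representative $f\in\calA$ of $F$: since $\dim B=\deg F-1$, every $f_n$ with $n\leq d$ is a monomorphism in $\Delta$ (either one of the well-defined $F_n$'s or a vertex $\Delta^0\hookrightarrow A_n$), so the factorization into elementary faces and the ``smaller indices first'' assembly go through verbatim, and $N\geq 2$ whenever $\deg F\leq d-1$. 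Maximality then gives $N=1$ and hence $\deg F\in\{d,d+1\}$ \emph{a posteriori}, after which your case analysis is correct as written.

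Your closing remark about an ``interior face in a coordinate $k<d$ with $a_k=1$'' is slightly off: such a candidate is excluded not because it factors through something larger, but because the resulting object would have a $\Delta^0$ in position $k$ with $B_{k+1}\neq\Delta^0$ and hence fails to lie in $\DDelta$ at all.
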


\begin{definition}
Let $A = \DDelta^{a_1,\ldots,a_d} \in \DDelta$, $k \in \{1,\ldots,d\}$ and $m \in \{0,\ldots,a_k\}$. Denote by $\delta_{k:m}^A \subset A$ the face whose $k$-th component is equal to the (classical) face $\delta_m^{A_k} \subset A_k = \Delta^{a_k}$. It is called an \emph{inner face} if the $k$-th component is a (classical) inner face.
\end{definition}

\begin{proposition}
The simplex $A = \DDelta^{a_1,\ldots,a_d}$ has precisely $\sum_{i=1}^d(a_i-1)$ many inner faces, and precisely $2 \,| \{ i \colon a_i \geq 2\} \cup \{d\}|$ many outer faces.
\end{proposition}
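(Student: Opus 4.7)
By the preceding lemma, every face of $A = \DDelta^{a_1,\ldots,a_d}$ is of the form $\delta_{k:m}^A$ for a unique pair $(k,m)$ with $k \in \{1,\ldots,d\}$ and $m \in \{0,\ldots,a_k\}$, where $\delta_{k:m}^A$ has $k$-th component the classical face $\delta_m^{a_k}$ and all remaining components identities. The plan is to enumerate these candidate pairs, show distinctness in $\calA/{\sim}$, and identify precisely which give \emph{maximal} proper subobjects; the count will then follow from elementary arithmetic.

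Distinctness is a direct unwinding of the equivalence $\sim$ together with the triviality of automorphisms in $\DDelta$ (from the preceding proposition). When $a_k \geq 2$, the morphism $\delta_{k:m}^A$ has degree $d+1$; when $a_k = 1$ it has degree $k$; in either case the relevant first components record the pair $(k,m)$, so distinct pairs yield distinct morphisms, hence distinct subobjects. The main work is the case analysis of maximality, stratified by the source $B$ of $\delta_{k:m}^A$ inside $\DDelta$: (a) if $a_k \geq 2$, then $B = \DDelta^{a_1,\ldots,a_k-1,\ldots,a_d}$ has dimension $d$; (b) if $a_k = 1$ and $k = d$, then $B = \DDelta^{a_1,\ldots,a_{d-1}}$ has dimension $d-1$; (c) if $a_k = 1$ and $k < d$, then applying $\sim$ collapses $B$ to $\DDelta^{a_1,\ldots,a_{k-1}}$ of dimension $k-1 < d-1$.

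In cases (a) and (b) I would argue maximality by a rigidity computation: any proper subobject strictly containing $B$ would, being itself contained in some maximal proper subobject, have to be contained in $\delta_{k':m'}^A$ for some $(k',m') \neq (k,m)$ with $a_{k'} \geq 2$; comparing components then forces $\delta_{m'}^{a_{k'}} \circ H_{k'} = \mathrm{id}_{\Delta^{a_{k'}}}$ for some $H_{k'}$, which is impossible since $\delta_{m'}^{a_{k'}}$ is a non-surjective mono. In case (c), $\delta_{k:m}^A$ is \emph{not} maximal: it factors through any $\delta_{d:m'}^A$ via the morphism $H$ whose first $k$ components are $(\mathrm{id},\ldots,\mathrm{id},\delta_m^1)$ with the remaining components chosen as arbitrary constants; both $\delta_{d:m'}^A \circ H$ and $\delta_{k:m}^A$ then have degree $k$ and agree on their first $k$ components, hence coincide in $\calA/{\sim}$.

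Granting the case analysis, the count is immediate. The inner faces are the admissible pairs with $m \in \{1,\ldots,a_k-1\}$; this forces $a_k \geq 2$, but the sum $\sum_{i=1}^d (a_i-1)$ already contributes zero at indices with $a_i = 1$, so the total is exactly $\sum_{i=1}^d (a_i-1)$. The outer faces are the admissible pairs with $m \in \{0,a_k\}$; by the case analysis these require $a_k \geq 2$ or $k = d$, so each such $k$ contributes two outer faces, giving the total $2\,|\{i \colon a_i \geq 2\} \cup \{d\}|$. The main obstacle is the rigidity step in (a) and (b), which requires a careful analysis of arbitrary (not necessarily mono) factorizations up to $\sim$ in order to rule out intermediate proper subobjects.
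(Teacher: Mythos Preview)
The paper states this proposition without proof, so there is nothing to compare against; your argument stands on its own and is essentially correct in outline. The case analysis (a)/(b)/(c) and the factoring argument in (c) are exactly right, and the final count is clean.

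Two small repairs are needed in the rigidity step. First, the containing face $\delta_{k':m'}^A$ need not satisfy $a_{k'}\geq 2$: it could also be the case $k'=d$ with $a_d=1$ (your own case~(b)). The degree constraint $\deg(\delta_{k':m'}^A\circ H)=\deg(\delta_{k:m}^A)$ is what actually rules out $a_{k'}=1$ with $k'<d$, and this is worth saying explicitly so that the components $H_{k'}$ you manipulate are well-defined modulo~$\sim$. Second, you only treat the subcase $k'\neq k$, where comparing $k'$-th components gives $\delta_{m'}^{a_{k'}}\circ H_{k'}=\mathrm{id}$; when $k'=k$ and $m'\neq m$ the equation is instead $\delta_{m'}^{a_k}\circ H_k=\delta_m^{a_k}$, which is equally impossible (the image of $\delta_m^{a_k}$ contains the vertex $m'$, which $\delta_{m'}^{a_k}$ misses), but this case should be mentioned. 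Finally, the step ``being itself contained in some maximal proper subobject'' silently uses that the poset of subobjects of $A$ in $\DDelta$ is finite; this is true (hom-sets are finite and $\dim B\leq\dim A$ for any mono $B\hookrightarrow A$, bounding the possible sources), but deserves a sentence.
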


\begin{definition}
Let $A \in \DDelta$. A simplicial subset $\Lambda \subset A$ is called a \emph{horn} if there is a face $B \subset A$ such that $\Lambda$ is the union of all faces of $A$ except $B$. It is called an \emph{inner horn} if the missing face is inner.
\end{definition}

\begin{definition}
Let $A = \DDelta^{a_1,\ldots,a_d} \in \DDelta$, $k \in \{1,\ldots,d\}$ and $m \in \{0,\ldots,a_k\}$. Define $\Lambda^{k:m} \subset A$ to be the horn whose missing face is $\delta_{k:m}^A \subset A$.
\end{definition}

\begin{definition}
A morphism $f \colon X \to Y$ in $\Set[\DDelta]$ is called an \emph{inner fibration} if it has the right lifting property with respect to every inner horn.
\end{definition}

\begin{definition}
A morphism $f \colon X \to Y$ in $\Set[\DDelta]$ is called \emph{inner anodyne} if it has the left lifting property with respect to every inner fibration.
\end{definition}

\begin{definition}
Let $A \in \DDelta$. Define the \emph{boundary} $\partial A \subset A$ to be the union of all faces of $A$.
\end{definition}

\begin{proposition}
The class of monomorphisms in $\Set[\DDelta]$ is generated (as a saturated class) by the set of boundaries $\partial A \subset A$, with $A \in \DDelta$.
\end{proposition}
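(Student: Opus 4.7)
The plan is to run the standard cell-attachment argument that establishes the analogous fact for ordinary simplicial sets. The essential input is that $\DDelta$ admits an Eilenberg--Zilber structure.

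First I would verify two structural properties of $\DDelta$. (i) Every morphism $f \colon A \to B$ in $\DDelta$ factors as $A \twoheadrightarrow C \hookrightarrow B$ with the first map an (automatically split) epimorphism and the second a monomorphism; this factorization is unique, since by the earlier proposition $\DDelta$ has no non-identity automorphisms. (ii) The size $\|A\| := \sum_{i=1}^{\dim A} \dim A_i$ is a nonnegative integer that strictly decreases along proper monomorphisms --- by the face lemma, each face drops $\|\cdot\|$ by exactly one, and any proper monomorphism factors through a face --- and strictly decreases along non-identity epimorphisms, argued componentwise in $\Delta$. Both claims ultimately reduce to the corresponding componentwise facts in $\Delta$, after unravelling the equivalence relation $\sim$.

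Granted (i) and (ii), the Eilenberg--Zilber lemma applies: every element $x \in X(A)$ of a presheaf $X$ on $\DDelta$ admits a unique presentation $x = \sigma^* x_\flat$ with $\sigma \colon A \twoheadrightarrow A_\flat$ an epimorphism and $x_\flat \in X(A_\flat)$ \emph{non-degenerate}, i.\,e.\ not in the image of any non-identity epimorphism. Identifying a monomorphism $f \colon X \hookrightarrow Y$ with its image, I would define, for $n \in \dbN$, $Y^{(n)} \subset Y$ to be the smallest subpresheaf containing $X$ together with the images of all non-degenerate cells of $Y$ of size at most $n$. Since every cell is a degeneracy of some non-degenerate cell, $Y = \bigcup_{n} Y^{(n)}$.

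The key observation is that, with the convention $Y^{(-1)} := X$, the inclusion $Y^{(n-1)} \hookrightarrow Y^{(n)}$ is a pushout of $\coprod_x (\partial A_x \hookrightarrow A_x)$ along an attaching map $\coprod_x \partial A_x \to Y^{(n-1)}$, where $x$ ranges over non-degenerate cells of $Y \setminus X$ with $\|A_x\| = n$. The attaching map is well-defined because, by (ii), every face of $A_x$ has size strictly less than $n$, so its non-degenerate reduction already lies in $Y^{(n-1)}$. Assembling, $f$ is the transfinite composition of these pushouts of coproducts of boundary inclusions, hence lies in the saturated class generated by $\{\partial A \hookrightarrow A : A \in \DDelta\}$. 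The reverse containment is immediate: monomorphisms form a saturated class containing every boundary inclusion.

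The main obstacle will be the rigorous verification of (i). The subtlety is that morphisms in $\DDelta$ are equivalence classes modulo $\sim$, so one must check that the classical componentwise epi--mono factorization in $\Delta$ descends to $\calA/{\sim}$ independently of the choice of representative, and that the middle object actually lies in the full subcategory $\DDelta$. Once (i) is established, property (ii) is elementary bookkeeping using the face lemma, and the remainder of the skeletal-filtration argument proceeds exactly as in the classical simplicial case.
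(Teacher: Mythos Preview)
The paper states this proposition without proof, treating it as a standard fact about presheaf categories over a suitable diagram category; there is therefore nothing to compare your argument against directly.

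Your proposal is the correct and standard route: establish that $\DDelta$ is an Eilenberg--Zilber (equivalently, elegant Reedy) category, then run the skeletal-filtration argument. This is indeed known for Simpson's $\Theta$; see for instance Berger's work on the cell category, or Bergner--Rezk on elegant Reedy categories. Your identification of the main obstacle---that the componentwise epi--mono factorization in $\prod_n \Delta$ must be shown to descend through the quotient by $\sim$ and land in the full subcategory $\DDelta$---is exactly right, and is the only non-formal step. One small refinement: when checking that the image object $C$ of the factorization lies in $\DDelta$ (i.e.\ that $C_n = \Delta^0$ for $n > \dim C$), you will want to use that the epimorphism $A \twoheadrightarrow C$ forces $C_k = \Delta^0$ whenever the $k$-th component of the representative is constant, and then observe that $\deg$ is invariant under $\sim$. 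Once (i) is in hand, the rest of your sketch goes through verbatim.
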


\begin{definition}
Let $A \in \DDelta$. Define the \emph{spine} $I(A) \subset A$ to contain a simplex $s \colon B \to A$ if and only if for all $k \in \{1,\ldots,\deg s\}$, the $k$-th component $s_k \colon B_k \to A_k$ is contained in the (classical) spine $I(A_k) \subset A_k$.
\end{definition}

\section{Cats and groupoids}

\begin{definition}
$\calC \in \Set[\DDelta]$ is called a \emph{cat} if for every inner horn $\Lambda \subset A$ the map $\calC(A) \to \calC(\Lambda)$ is surjective. $\calG \in \Set[\DDelta]$ is called a \emph{groupoid} if for every horn $\Lambda \subset A$ the map $\calG(A) \to \calG(\Lambda)$ is surjective.
\end{definition}

\begin{definition}
A cat $\calC$ is called \emph{strict} if for every inner horn $\Lambda \subset A$ the map $\calC(A) \to \calC(\Lambda)$ is bijective. A groupoid $\calG$ is called \emph{strict} if for every horn $\Lambda \subset A$ the map $\calG(A) \to \calG(\Lambda)$ is bijective.
\end{definition}

\begin{proposition}
A groupoid is strict if and only if it is strict when viewed as a cat.
\end{proposition}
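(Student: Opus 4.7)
The forward direction is immediate: a strict groupoid satisfies bijectivity of $\calG(A) \to \calG(\Lambda)$ for every horn $\Lambda \subset A$, in particular for every inner horn, so $\calG$ is strict when viewed as a cat.

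For the converse, let $\calG$ be a groupoid that is strict as a cat. Surjectivity of $\calG(A) \to \calG(\Lambda)$ for every horn is the groupoid assumption, so what remains is to prove injectivity when $\Lambda = \Lambda^{k:m} \subset A = \DDelta^{a_1,\ldots,a_d}$ is an outer horn, that is, $m \in \{0, a_k\}$. By the evident symmetry I may assume $m = 0$. The plan is to pass to the enlargement $A^+ := \DDelta^{a_1,\ldots,a_k+1,\ldots,a_d}$, in which $A$ embeds as the face $\delta_{k:1}^{A^+}$. This face is inner, since $\delta_1^{\Delta^{a_k+1}}$ is a classical inner face of $\Delta^{a_k+1}$. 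Given $\sigma, \sigma' \in \calG(A)$ that agree on $\Lambda$, I will lift them to elements $\tilde\sigma, \tilde\sigma' \in \calG(A^+)$ which agree on the inner horn $\Lambda^{k:1} \subset A^+$ (whose missing face is precisely $\delta_{k:1}^{A^+} \cong A$). Strictness as a cat, applied to this inner horn, will then force $\tilde\sigma = \tilde\sigma'$, and restriction to $\delta_{k:1}^{A^+}$ yields $\sigma = \sigma'$.

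The crux is arranging $\tilde\sigma$ and $\tilde\sigma'$ to coincide on every face of $A^+$ other than $\delta_{k:1}^{A^+}$. I intend to construct these extensions face-by-face via the groupoid property, proceeding by induction on the pair $(\dim A, a_1, \ldots, a_d)$ in a lexicographic order. The inductive hypothesis supplies that each outer-horn filling used to define a ``new'' face of $A^+$ is itself uniquely determined by the shared data $\sigma|_\Lambda = \sigma'|_\Lambda$, so the corresponding faces of $\tilde\sigma$ and $\tilde\sigma'$ must agree. The main obstacle is the base case $A = \DDelta^1$, where no inner horn of $\DDelta^1$ constrains $\calG$ at all: the required uniqueness of a $1$-cell given its source must be extracted by passing to $\DDelta^2$, invoking strictness of the inner horn $\Lambda^{1:1} \subset \DDelta^2$, and using a groupoid filling of an outer horn of $\DDelta^2$ to supply a formal inverse that pins down the otherwise-free endpoint. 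Making these two ingredients interlock into a clean injectivity statement is the real content of the proof.
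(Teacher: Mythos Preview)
The paper states this proposition without any proof, so there is nothing in the text to compare your argument against.

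On the substance of your sketch: the forward direction is indeed immediate. For the converse, your overall strategy---enlarging $A$ in the $k$-th direction so that the missing outer face becomes the missing \emph{inner} face $\delta^{A^+}_{k:1}$ of $A^+$, and then invoking strictness as a cat for the inner horn $\Lambda^{k:1}\subset A^+$---is the natural one and mirrors the classical passage from $\Delta^n$ to $\Delta^{n+1}$. However, as written this is an outline rather than a proof. The step ``lift $\sigma,\sigma'$ to $\tilde\sigma,\tilde\sigma'\in\calG(A^+)$ agreeing on $\Lambda^{k:1}$'' is precisely where all the work lies, and your proposal to build these lifts face-by-face using the groupoid property, with the inductive hypothesis guaranteeing that the new faces are \emph{forced}, is circular unless the induction is set up very carefully: the faces of $A^+$ other than $\delta^{A^+}_{k:1}$ are themselves obtained from outer-horn fillings whose uniqueness is exactly the statement under proof for those smaller simplices. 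You flag this honestly, but you do not actually carry out the induction or verify that the order in which faces are filled can be arranged so that each step only invokes strictly smaller instances.

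The base case $A=\DDelta^1$ that you single out is genuinely the crux, and your remark that it must be handled by passing to $\DDelta^2$ and manufacturing a formal inverse via an outer-horn fill, then pinning things down with the unique inner $\Lambda^{1:1}$-filler, is the right intuition. But note what you are asking for: injectivity of $\calG(\DDelta^1)\to\calG(\Lambda^{1:0})$ says that an edge is determined by its source alone. In the classical simplicial setting this fails for, e.g., the nerve of a nontrivial groupoid, so whatever argument you give must genuinely use the additional horn-filling conditions coming from simplices $A\in\DDelta$ of dimension $\geq 2$ (not just from $\DDelta_1\cong\Delta$). Your sketch does not yet explain how those higher horns enter. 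Until that mechanism is made explicit, the argument remains a plan rather than a proof.
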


\begin{definition}
Let $n \in \dbN_+$. A cat $\calC$ is called $n$-strict if for every inner horn $\Lambda \subset A$ with $\dim A \geq n$, the map $\calC(A) \to \calC(\Lambda)$ is bijective.
\end{definition}
\begin{remark}
A cat is strict if and only if it is 1-strict.
\end{remark}

\section{The model structure for cats}

\begin{definition}
The Cisinski model structure on $\Set[\DDelta]$ generated by the set of spines is called the \emph{model structure for cats}. The weak equivalences of this model structure are called \emph{weak cat equivalences}.
\end{definition}

\begin{definition}
Let $A \in \DDelta$. Denote by $\frakF_A$ the set of faces of $A$, by $\frakF_A^i$ the set of inner faces of $A$, and by $\frakF_A^o$ the set of outer faces of $A$.
\end{definition}

\begin{lemma}
\begin{compactenum}[(i)]
\item $I(\DDelta^{1,\ldots,1}) = \DDelta^{1,\ldots,1}$.
\item If $A \in \DDelta$ is not of the form $A=\DDelta^{1,\ldots,1}$, then $I(A) \subset \bigcup \frakF_A^o$.
\end{compactenum}
\end{lemma}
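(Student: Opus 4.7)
The plan is to prove both items by componentwise analysis, using the classical fact that a sub-simplex of $\Delta^n$ lies in $I(\Delta^n)$ if and only if it factors through some edge $[i, i+1]$. For (i), I would observe that $I(\Delta^1) = \Delta^1$, so any simplex $s \colon B \to \DDelta^{1,\ldots,1}$ has every component $s_k \colon B_k \to \Delta^1$ trivially in the classical spine; hence $s \in I(A)$ by the defining condition, giving $I(A) = A$.

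For (ii), I would fix an index $j$ with $a_j \geq 2$ and a simplex $s \colon B \to A$ in $I(A)$, and aim to factor $s$ through one of the outer faces $\delta_{j:0}^A$ or $\delta_{j:a_j}^A$ (both are legitimate outer faces, counted by the $2|\{i \colon a_i \geq 2\}|$ contribution to the proposition about outer faces, precisely because $a_j \geq 2$). The argument splits on whether $j \leq \deg s$. If $j \leq \deg s$, the spine condition forces $s_j \colon B_j \to \Delta^{a_j}$ to land in some edge $[i, i+1]$; since $\Delta^{a_j}$ has at least three vertices, this edge misses one of $0$ or $a_j$, so $s_j$ factors through the corresponding classical outer face $\delta_m^{A_j}$, and extending by identities in positions $k \neq j$ factors $s$ through $\delta_{j:m}^A$. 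If instead $j > \deg s$, the equivalence class of $s$ in $\calA/{\sim}$ is determined only by $s_1, \ldots, s_{\deg s}$; I therefore define a lift $t \colon B \to \delta_{j:m}^A$ by taking $t_k = s_k$ for $k \neq j$ and choosing $t_j \colon B_j \to \Delta^{a_j-1}$ arbitrarily, and verify that the composition $\delta_{j:m}^A \circ t$ agrees with $s$ on the first $\deg s$ components and has the same degree, hence is $\sim$-equivalent to $s$.

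The main subtlety I anticipate is in this second case: one has to check that modifying the $j$-th component for $j > \deg s$ neither alters the equivalence class on the essential components nor changes $\deg s$ (in particular that the smallest $k$ with $k$-th component constant is still $\deg s$ after the modification). Both verifications are immediate from the definition of $\deg$ and of $\sim$, but together they are the one place where the quotient structure of $\calA/{\sim}$ genuinely enters the argument; everything else reduces to the classical simplicial observation about edges of $\Delta^n$ used at the outset.
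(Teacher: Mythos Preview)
The paper states this lemma without proof, so there is nothing to compare against; your argument stands on its own and is correct. Part (i) follows exactly as you say from $I(\Delta^1)=\Delta^1$ (together with $I(\Delta^0)=\Delta^0$ for the component at index $\deg s$), and for part (ii) the choice of a coordinate $j$ with $a_j\ge 2$ and the dichotomy $j\le\deg s$ versus $j>\deg s$ cleanly reduces everything to the classical fact about $I(\Delta^{a_j})$ and to the definition of $\sim$, respectively.

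Two small points of phrasing. In the first case, ``extending by identities in positions $k\ne j$'' should read ``taking $t_k=s_k$ for $k\ne j$''; it is the face map $\delta^A_{j:m}$, not the factorization $t$, that has identity components away from $j$. Also note that the subcase $j=\deg s$ (where $s_j$ is constant) is harmless: a constant map still lands in a single vertex of $\Delta^{a_j}$, hence in some edge, and the rest of the argument applies verbatim. Neither of these is a gap, just wording.
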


\begin{lemma}[cf. Proposition 2.12 in Joyal \cite{joyal.ttoqcaia}]
For every $A \in \DDelta$ and every subset $\Gamma \subsetneq \frakF_A$ containing all outer faces of $A$, the inclusion $\bigcup \Gamma \subset A$ is inner anodyne.
\end{lemma}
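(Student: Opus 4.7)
The plan is to mirror Joyal's classical argument by double induction. The outer induction is on a complexity measure of $A \in \DDelta$ --- e.g.\ $\dim A + a_1 + \cdots + a_d$ for $A = \DDelta^{a_1,\ldots,a_d}$, which strictly decreases on passage to a face by the earlier structural lemma on faces. The inner induction is on $r := |\frakF_A \setminus \Gamma|$, the number of missing faces, all of which are inner by hypothesis.

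For the base case $r = 1$, $\bigcup \Gamma$ is by definition an inner horn $\Lambda \subset A$, and $\Lambda \hookrightarrow A$ is inner anodyne by definition. For the inductive step $r \geq 2$, I fix an enumeration $F_0, F_1, \ldots, F_{r-1}$ of the missing inner faces (chosen so that the argument below closes) and fill them one at a time. For the first step set $\Gamma' := \Gamma \cup \{F_0\}$; the inclusion $\bigcup \Gamma' \hookrightarrow A$ is inner anodyne by the inner induction hypothesis, so it suffices to show $\bigcup \Gamma \hookrightarrow \bigcup \Gamma'$ is inner anodyne. Since $\bigcup \Gamma' = F_0 \cup \bigcup \Gamma$, this inclusion is a pushout of $F_0 \cap \bigcup \Gamma \hookrightarrow F_0$, and inner anodynes are stable under pushout; we reduce to showing $F_0 \cap \bigcup \Gamma \hookrightarrow F_0$ is inner anodyne.

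To apply the outer induction hypothesis to $F_0$, I verify three structural facts: \textbf{(a)} for distinct faces $F, G$ of $A$, the intersection $F \cap G$ is canonically a face of $F$, identifying $F_0 \cap \bigcup \Gamma = \bigcup \Gamma_0$ with $\Gamma_0 \subseteq \frakF_{F_0}$; \textbf{(b)} every outer face of the inner face $F_0$ arises as $F_0 \cap G$ for some outer face $G$ of $A$ (which necessarily lies in $\Gamma$), hence $\Gamma_0 \supseteq \frakF_{F_0}^o$; and \textbf{(c)} with an appropriate choice of the ordering of the missing faces, at least one face of $F_0$ lies outside $\Gamma_0$, so $\Gamma_0 \subsetneq \frakF_{F_0}$. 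Granted (a)--(c), the outer induction hypothesis applies to $(F_0, \Gamma_0)$, yielding that $F_0 \cap \bigcup \Gamma \hookrightarrow F_0$ is inner anodyne.

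The main obstacle is the verification of \textbf{(a)}--\textbf{(c)} using the explicit description of faces in $\DDelta$ from the earlier lemma, namely that a face of $\DDelta^{a_1,\ldots,a_d}$ modifies exactly one component $\Delta^{a_k}$ via a classical face, with degeneracies when $a_k = 1$. For (a), two distinct faces either modify different coordinates (the intersection modifies both, and is a face of each in the ``transverse'' direction) or the same coordinate by two different classical faces (reducing to the classical intersection in $\Delta^{a_k}$). For (b), the argument parallels the classical observation that an inner face $F_0$ retains the extremal vertices of $A$, so outer faces of $F_0$ descend from outer faces of $A$. For (c), one chooses $F_0$ so that some later missing face $F_j$ produces a face $F_0 \cap F_j$ of $F_0$ that is not obtainable as $F_0 \cap G$ for any $G \in \Gamma$ --- the analog of Joyal's ordering of missing faces by index, used to supply the required ``fresh'' face at each step. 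The edge cases $a_k = 1$ and the terminal coordinate $k = d$, where the outer-face count in the preceding proposition behaves nonuniformly, will require particular care throughout.
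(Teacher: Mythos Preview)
Your approach is essentially the paper's: induction on $A$, the base case $r=1$ being an inner horn, and the inductive step handled by the pushout square attaching one missing inner face $B$ and invoking the induction hypothesis on $B$ with the set $\{F \cap B : F \in \Gamma\} \subset \frakF_B$. The only divergence is your condition \textbf{(c)}: you anticipate needing a Joyal-style ordering of the missing faces to guarantee $\Gamma_0 \subsetneq \frakF_{F_0}$, but the paper needs no ordering at all --- since $r \geq 2$ there is another missing face $C \neq B$, and because $F \mapsto F \cap B$ is injective from $\frakF_A \setminus \{B\}$ to $\frakF_B$, the face $C \cap B$ already witnesses properness for \emph{any} choice of $B$. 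You can therefore drop the enumeration and the ordering concern entirely.
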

\begin{proof}
We proceed by induction on $A$: let $A \in \DDelta$, and assume that for every face $B$ of $A$ and every subset $\Gamma \subsetneq \frakF_B$ containing all outer faces of $B$, the inclusion $\bigcup \Gamma \subset B$ is inner anodyne. Let $\Gamma \subsetneq \frakF_A$ contain all outer faces of $A$. If $\Gamma$ contains all faces of $A$ except one inner face, then the inclusion $\bigcup \Gamma \subset A$ is an inner horn of $A$, so is inner anodyne. Assume that $\Gamma$ misses at least two inner faces of $A$. It suffices to show that for $B \in \frakF_A \setminus \Gamma$, the inclusion $\beta \colon \bigcup \Gamma \subset \bigcup \Gamma \cup B$ is inner anodyne.

We have a pushout diagram
$$\begin{xy}\xymatrix{
\bigcup_{F \in \Gamma} (F \cap B) \ar@{^{(}->}[r] \ar@{^{(}->}[d]_\alpha & \bigcup \Gamma \ar@{^{(}->}[d]^\beta\\
B \ar@{^{(}->}[r] & \bigcup \Gamma \cup B
}\end{xy}$$

The subset $\{F \cap B \colon F \in \Gamma\} \subset \frakF_B$ is proper and contains all outer faces of $B$. Hence, by the induction hypothesis on $B$, it follows that $\alpha$ is inner anodyne. Because the class of inner anodyne maps is stable under pushouts, we conclude that $\beta$ is inner anodyne.
\end{proof}

\begin{conjecture}[cf. Proposition 2.13 in Joyal \cite{joyal.ttoqcaia}]
For every $A \in \DDelta$ the inclusion $I(A) \subset I(A) \cup \bigcup \frakF_A^o$ is inner anodyne.
\end{conjecture}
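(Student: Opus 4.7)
\emph{Plan.} I propose to prove the conjecture by induction on $A$ with respect to the well-founded order "$B\prec A$ iff $B$ is a proper face of $A$", strengthening the inductive claim to the statement $(\star_A)$: for every subset $\Gamma\subseteq\frakF_A^o$, the inclusion $I(A)\cup\bigcup\Gamma\hookrightarrow I(A)\cup\bigcup\frakF_A^o$ is inner anodyne. The conjecture is $(\star_A)$ at $\Gamma=\emptyset$; $(\star_A)$ is trivial when $A=\DDelta^\emptyset$ or $A=\DDelta^{1,\ldots,1}$ (the latter because $I(A)=A$ by part (i) of the preceding Lemma).

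At the inductive step for $A$, sub-induct on $|\frakF_A^o\setminus\Gamma|$. If $\Gamma=\frakF_A^o$ there is nothing to do; otherwise pick $F\in\frakF_A^o\setminus\Gamma$ and set $Y:=I(A)\cup\bigcup\Gamma$. By the sub-IH applied to $\Gamma\cup\{F\}$ it suffices to show that $Y\hookrightarrow Y\cup F$ is inner anodyne, and the pushout
$$\begin{xy}\xymatrix{Y\cap F\ar@{^{(}->}[r]\ar@{^{(}->}[d]&Y\ar@{^{(}->}[d]\\ F\ar@{^{(}->}[r]&Y\cup F}\end{xy}$$
reduces this further to the attaching map $Y\cap F\hookrightarrow F$ being inner anodyne. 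An unwinding of the componentwise definition of the spine yields $I(A)\cap F=I(F)$, so $Y\cap F=I(F)\cup\bigcup_{F'\in\Gamma}(F'\cap F)$. The geometric heart of the argument is the following sub-claim: for any two distinct outer faces $F=\delta^A_{k:m}$, $F'=\delta^A_{k':m'}$ of $A=\DDelta^{a_1,\ldots,a_d}$, the intersection $F'\cap F$, regarded as a sub-object of $F$, is either empty or itself an outer face of $F$. I intend to verify this by a case analysis distinguishing $k=k'$ from $k\neq k'$, with special attention to the two ``top'' outer faces that exist when $a_d=1$.

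Granting the sub-claim, $Y\cap F=I(F)\cup\bigcup\Gamma'$ for some $\Gamma'\subseteq\frakF_F^o$, and we factor
\[I(F)\cup\bigcup\Gamma' \;\hookrightarrow\; I(F)\cup\bigcup\frakF_F^o \;\hookrightarrow\; F.\]
The first arrow is inner anodyne by $(\star_F)$, available by the outer induction since $F$ is a proper face of $A$. The second is inner anodyne by the preceding Lemma applied to $F$ with the proper subset $\frakF_F^o\subsetneq\frakF_F$ (proper as soon as $F$ has at least one inner face; otherwise $F=\DDelta^{1,\ldots,1}$ and both terms already equal $F$). The main obstacle is the sub-claim on intersections of outer faces; the rest is a routine filtration-plus-pushout argument closely modelled on the proof of the preceding Lemma and on Joyal's Proposition 2.13. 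The strengthening $\Gamma\subseteq\frakF_A^o$ over the bare conjecture is required precisely because each recursive step produces a non-trivial $\Gamma'\subseteq\frakF_F^o$ from ambient intersections, so the inductive hypothesis on $F$ must hold for arbitrary starting subsets of outer faces of $F$, not only for the spine alone.
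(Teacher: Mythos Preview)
The paper does \emph{not} prove this statement: it is explicitly labelled a Conjecture, and the text immediately afterwards reads ``From now on, assume the previous conjecture to be proven.'' So there is no proof in the paper to compare against; the question is only whether your attempt succeeds.

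It does not, because your ``geometric heart'' sub-claim is false. Take $A=\DDelta^{2,1}$ and the two top outer faces $F=\delta^A_{2:0}$, $F'=\delta^A_{2:1}$, both isomorphic to $\DDelta^2$. A simplex $s\colon B\to A$ lies in $F\cap F'$ iff $\deg s\le 1$, so as a subobject of $F\cong\DDelta^2$ the intersection is the $0$-skeleton (three points). This is neither empty nor an outer face of $\DDelta^2$. You might hope to salvage the argument by weakening the sub-claim to ``$(F'\cap F)\cup I(F)$ equals $I(F)\cup\bigcup\Gamma'$ for some $\Gamma'\subseteq\frakF_F^o$,'' which is what your induction actually uses. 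But this weaker statement also fails. Take $A=\DDelta^{2,2,1}$ and again the two top outer faces $F=\delta^A_{3:0}$, $F'=\delta^A_{3:1}$, both isomorphic to $\DDelta^{2,2}$. Then $F\cap F'\subset F$ is the sub-presheaf $\{t: t_2\text{ constant}\}$. One checks directly that $I(F)\cup\{t:t_2\text{ const}\}$ is not of the form $I(F)\cup\bigcup\Gamma'$ for any $\Gamma'\subseteq\frakF_F^o$: the simplex with $t_1$ surjective and $t_2$ constant at $0$ forces $\delta^F_{2:2}\in\Gamma'$, while the simplex with $t_1$ surjective and $t_2$ the edge $\{0,1\}$ forces $\delta^F_{2:2}\notin\Gamma'$.

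The difficulty is precisely the pair of ``top'' outer faces that arise when $a_d=1$: their mutual intersection inside either one is the locus where the $(d{-}1)$-st component is constant, and this is not controlled by the outer-face combinatorics of the smaller simplex. Your strengthened hypothesis $(\star_A)$ for \emph{arbitrary} $\Gamma\subseteq\frakF_A^o$ is therefore not accessible by your pushout-and-recurse scheme as written. A possible repair is to constrain the order in which outer faces are attached (handle the two top faces last, after all lower outer faces are already present, so that their problematic intersection is absorbed); but then the inductive hypothesis you inherit on each $F$ is no longer ``arbitrary $\Gamma'$,'' and you would need to reformulate $(\star)$ accordingly. As stated, the plan has a genuine gap at exactly the point you flagged as the main obstacle.
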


From now on, assume the previous conjecture to be proven.

\begin{proposition}[cf. Proposition 2.13 in Joyal \cite{joyal.ttoqcaia}]
Every spine $I(A) \subset A$, $A \in \DDelta$, is inner anodyne.
\end{proposition}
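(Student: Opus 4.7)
The plan is to decompose the inclusion $I(A) \subset A$ through the intermediate subcomplex $\bigcup \frakF_A^o$ of outer faces, bypassing induction and directly combining the three tools already on hand: part (i) and (ii) of the lemma on $I(A)$ versus $\bigcup \frakF_A^o$, the Joyal-style lemma on subsets $\Gamma \subsetneq \frakF_A$ containing the outer faces, and the conjecture just assumed.

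First I would split into two cases according to whether $A = \DDelta^{1,\ldots,1}$. In that degenerate case, part (i) of the lemma gives $I(A) = A$, so the inclusion is an identity and is trivially inner anodyne. From now on assume $A \neq \DDelta^{1,\ldots,1}$, i.e.\ some component $a_k$ satisfies $a_k \geq 2$, so that $A$ has at least one inner face and consequently $\frakF_A^o \subsetneq \frakF_A$.

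Next I would factor the inclusion as
\[
I(A) \;\hookrightarrow\; \bigcup \frakF_A^o \;\hookrightarrow\; A.
\]
For the first map: by part (ii) of the lemma, $I(A) \subset \bigcup \frakF_A^o$, hence $I(A) \cup \bigcup \frakF_A^o = \bigcup \frakF_A^o$, and the assumed conjecture gives that $I(A) \subset \bigcup \frakF_A^o$ is inner anodyne. For the second map: apply the Joyal-style lemma to $\Gamma := \frakF_A^o$, which is a proper subset of $\frakF_A$ (by the case assumption) and contains all outer faces of $A$ by construction; the lemma then yields that $\bigcup \frakF_A^o \subset A$ is inner anodyne. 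Composing two inner anodyne maps gives an inner anodyne map, since the class of inner anodyne maps is saturated.

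The argument is essentially a bookkeeping exercise once the conjecture and the two lemmas are in place; the only subtlety is treating $A = \DDelta^{1,\ldots,1}$ separately, because in that situation there are no inner faces at all, so $\frakF_A^o = \frakF_A$ and the Joyal-style lemma cannot be invoked with $\Gamma = \frakF_A^o$. Fortunately this is precisely the case where $I(A) = A$, so no nontrivial filling is needed. I do not anticipate any genuine obstacle beyond this case distinction; the real content has been pushed into the preceding conjecture.
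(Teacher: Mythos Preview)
Your argument is correct and is precisely the proof the paper has set up: the case split via part (i) of the lemma handles $A=\DDelta^{1,\ldots,1}$, and otherwise the factorization $I(A)\hookrightarrow \bigcup\frakF_A^o\hookrightarrow A$ is inner anodyne by the assumed conjecture (using part (ii) to identify $I(A)\cup\bigcup\frakF_A^o=\bigcup\frakF_A^o$) and the Joyal-style lemma with $\Gamma=\frakF_A^o$. The paper leaves this proof implicit, but your write-up matches the intended deduction exactly.
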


\begin{conjecture}[cf. Lemma 3.5 in Joyal \cite{joyal.qcvsss}]
\label{spineconjecture2}
For every $A \in \DDelta$, the inclusion $I(A) \subset I(A) \cup \bigcup \frakF_A^o$ is a weak cat equivalence.
\end{conjecture}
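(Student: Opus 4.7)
The natural approach is 2-out-of-3. By construction of the model structure for cats, every spine $I(A) \subset A$ is a generating trivial cofibration, hence a weak cat equivalence. Factoring the spine through $I(A) \cup \bigcup \frakF_A^o$, the conjecture reduces to showing that the inclusion $I(A) \cup \bigcup \frakF_A^o \subset A$ is itself a weak cat equivalence.

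If $A = \DDelta^{1,\ldots,1}$, then by part~(i) of the first lemma of this section, $I(A) = A$ and there is nothing to prove. Otherwise, part~(ii) gives $I(A) \subset \bigcup \frakF_A^o$, so $I(A) \cup \bigcup \frakF_A^o = \bigcup \frakF_A^o$. Since $A$ then has at least one inner face, $\frakF_A^o \subsetneq \frakF_A$, and the preceding lemma (applied with $\Gamma = \frakF_A^o$) shows that $\bigcup \frakF_A^o \subset A$ is inner anodyne. The task is therefore to upgrade this particular inner anodyne inclusion to a weak cat equivalence.

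A natural way to do so is to prove by induction on $A$ the following strengthening: \emph{for every $\Gamma \subsetneq \frakF_A$ containing all outer faces of $A$, the inclusion $\bigcup \Gamma \subset A$ is a weak cat equivalence.} Mimicking the induction in the proof of the preceding lemma, one adjoins inner faces $B \in \frakF_A \setminus \Gamma$ one at a time via the pushout
$$\begin{xy}\xymatrix{
\bigcup_{F \in \Gamma}(F \cap B) \ar@{^{(}->}[r] \ar@{^{(}->}[d]_\alpha & \bigcup \Gamma \ar@{^{(}->}[d] \\
B \ar@{^{(}->}[r] & \bigcup \Gamma \cup B
}\end{xy}$$
and argues that $\alpha$ is a weak cat equivalence, so that the right vertical is as well (pushouts of trivial cofibrations being trivial cofibrations). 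The family $\{F \cap B : F \in \Gamma\} \subset \frakF_B$ contains all outer faces of $B$, because outer faces of $A$ meet $B$ in outer faces of $B$; provided this family is a proper subset of $\frakF_B$, the induction hypothesis applied to $B$ delivers the required weak equivalence for $\alpha$.

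The hard part is the endgame of the induction, when $\Gamma$ misses exactly one inner face $B$: then $\{F \cap B : F \in \Gamma\}$ exhausts $\frakF_B$ and $\alpha$ degenerates to the boundary inclusion $\partial B \subset B$, which is not a weak cat equivalence. In that case $\bigcup \Gamma \subset A$ is precisely an inner horn inclusion $\Lambda^{k:m} \subset A$, so Conjecture~\ref{spineconjecture2} reduces ultimately to the statement that every inner horn inclusion is a weak cat equivalence --- equivalently, to the conjecture from the abstract that the Cisinski model structures generated by spines and by inner horns coincide. This last step presumably requires a genuinely different technique, perhaps an analogue of Joyal's proof of Lemma~3.5 of \cite{joyal.qcvsss} via the nerve / fundamental-category adjunction, which has no obvious direct counterpart in $\Set[\DDelta]$.
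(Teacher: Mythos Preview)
The statement you are attempting to prove is not proved in the paper: it is explicitly a \emph{Conjecture}, and the text immediately following it reads ``From now on, assume the previous conjecture to be proven.'' There is therefore no proof in the paper to compare your proposal against.

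Your reduction via 2-out-of-3 and the pushout induction is sound, and you correctly isolate the obstruction: the induction collapses precisely when $\Gamma$ misses a single inner face, leaving you with the assertion that every inner horn inclusion is a weak cat equivalence. But note that within the paper's logical architecture this is circular. The paper \emph{uses} Conjecture~\ref{spineconjecture2} as input to Lemma~5.8 (whose proof invokes it), and from Lemma~5.8 together with the spine inclusions one then deduces that $\bigcup\Gamma \subset A$ --- and in particular every inner horn --- is a weak cat equivalence, yielding Propositions~5.9 and~5.10. So what you have established is that Conjecture~\ref{spineconjecture2} and ``inner horns are weak cat equivalences'' are equivalent, which is consistent with the paper's structure but advances neither statement. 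Your closing paragraph already recognizes this: a genuine proof would require an independent argument, and the paper offers none.
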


From now on, assume the previous conjecture to be proven.

\begin{lemma}[cf. Lemma 3.5 in Joyal \cite{joyal.qcvsss}]
For every $A \in \DDelta$ and every subset $\Gamma \subsetneq \frakF_A$ containing all outer faces of $A$, the inclusion $\bigcup \frakF_A^o \subset \bigcup \Gamma$ is a weak cat equivalence.
\end{lemma}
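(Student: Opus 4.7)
The plan is to apply the two-out-of-three property for weak cat equivalences twice along the chain
\[
I(A) \,\hookrightarrow\, \bigcup \frakF_A^o \,\hookrightarrow\, \bigcup \Gamma \,\hookrightarrow\, A.
\]
The three ingredients I use are: Conjecture~\ref{spineconjecture2}; the proposition that every spine inclusion is inner anodyne (and hence a weak cat equivalence, since spines are among the generating trivial cofibrations of the cat model structure); and the preceding lemma (cf.\ Joyal 2.12) giving that $\bigcup \Gamma \hookrightarrow A$ is inner anodyne whenever $\Gamma \subsetneq \frakF_A$ contains all outer faces.

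First, I would dispose of the trivial case $A = \DDelta^{1,\ldots,1}$: there $\frakF_A = \frakF_A^o$, so no proper $\Gamma \subsetneq \frakF_A$ can contain all outer faces, and the statement is vacuous. Otherwise, the earlier lemma comparing $I(A)$ with $\bigcup \frakF_A^o$ yields $I(A) \subseteq \bigcup \frakF_A^o$, so the chain above is a genuine chain of inclusions.

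Step one: $\bigcup \frakF_A^o \hookrightarrow A$ is a weak cat equivalence. By Conjecture~\ref{spineconjecture2} the inclusion $I(A) \hookrightarrow I(A) \cup \bigcup \frakF_A^o = \bigcup \frakF_A^o$ is a weak cat equivalence; the spine inclusion $I(A) \hookrightarrow A$ is a weak cat equivalence because spines generate the cat model structure; two-out-of-three on $I(A) \hookrightarrow \bigcup \frakF_A^o \hookrightarrow A$ gives the claim. Step two: the preceding lemma gives that $\bigcup \Gamma \hookrightarrow A$ is inner anodyne, hence a weak cat equivalence. A final two-out-of-three on $\bigcup \frakF_A^o \hookrightarrow \bigcup \Gamma \hookrightarrow A$ yields the lemma.

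The one delicate point is the implication \emph{inner anodyne $\Rightarrow$ weak cat equivalence} invoked in step two: this is not explicitly among the standing assumptions, and belongs to the broader expectation (sketched in the abstract) that the Cisinski model structure generated by spines coincides with the one generated by inner horns. To avoid this dependency, one could perform a joint induction reproving the preceding lemma and the present one together, strengthening the conclusion there from ``inner anodyne'' to ``inner anodyne and a weak cat equivalence''; in the pushout square of that lemma's proof, the inductive hypothesis applied to the face $B$ would upgrade $\alpha$ to a weak cat equivalence, and $\beta$ would inherit this by left-properness of the cat model structure.
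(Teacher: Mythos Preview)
Your primary argument is circular in the paper's logical order, and you correctly flag this: the implication ``inner anodyne $\Rightarrow$ weak cat equivalence'' is the \emph{next} proposition in the paper, and it is deduced from the present lemma (one uses this lemma to show each inner horn inclusion is a weak cat equivalence, then saturates). So your Step~2 cannot invoke it.

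Your fallback is close to, but not quite, what the paper does. The paper proves the lemma by a direct induction on $A$, using the \emph{same} pushout square as in the earlier lemma (cf.\ Joyal~2.12): for $B \in \frakF_A \setminus \Gamma$ with $\Gamma \cup \{B\} \subsetneq \frakF_A$, one factors
\[
I(B) \hookrightarrow \textstyle\bigcup_{F \in \Gamma}(F \cap B) \xhookrightarrow{\ \alpha\ } B,
\]
observes that the first map is a weak cat equivalence by the induction hypothesis on $B$ together with Conjecture~\ref{spineconjecture2}, and that the composite is the spine inclusion; two-out-of-three gives $\alpha$, and pushout-stability of trivial cofibrations gives $\beta$. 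Composing such $\beta$'s builds $\bigcup \frakF_A^o \hookrightarrow \bigcup \Gamma$ directly. The key point is that this argument goes \emph{upward} from $\bigcup \frakF_A^o$ to $\bigcup \Gamma$ and never needs to reach $A$.

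By contrast, your proposed fix --- strengthening the earlier lemma to ``$\bigcup \Gamma \hookrightarrow A$ is inner anodyne \emph{and} a weak cat equivalence'' --- runs into trouble at that lemma's base case: when $\Gamma$ misses exactly one inner face, $\bigcup \Gamma \hookrightarrow A$ is an inner horn inclusion, and you have no independent reason yet to know it is a weak cat equivalence. You could patch this by also feeding in your Step~1 at that point, but then you have reassembled the paper's argument in a roundabout way. The cleaner route is the paper's: induct directly on the statement of the present lemma, stay below $A$, and never touch the inner-horn base case.
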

\begin{proof}
We prove this by induction on $A$: let $A \in \DDelta$, and assume that for every face $B$ of $A$ and every subset $\Gamma \subsetneq \frakF_B$ containing all outer faces of $B$, the inclusion $\bigcup \frakF_B^o \subset \bigcup \Gamma$ is a weak cat equivalence.

Let $\Gamma \subsetneq \frakF_A$ contain all outer faces of $A$. Assume $B \in \frakF_A \setminus \Gamma$ such that $\Gamma \cup \{B\} \subsetneq \frakF_A$. We show that the inclusion $\beta \colon \bigcup \Gamma \subset \bigcup \Gamma \cup B$ is a weak cat equivalence.

We have a pushout diagram
$$\begin{xy}\xymatrix{
\bigcup_{F \in \Gamma} (F \cap B) \ar@{^{(}->}[r] \ar@{^{(}->}[d]_\alpha & \bigcup \Gamma \ar@{^{(}->}[d]^\beta\\
B \ar@{^{(}->}[r] & \bigcup \Gamma \cup B
}\end{xy}$$

Consider $I(B) \hookrightarrow \bigcup_{F \in \Gamma} (F \cap B) \xhookrightarrow{\alpha} B$. By the induction hypothesis on $B$ and conjecture \ref{spineconjecture2}, the first map is a weak cat equivalence. The composition $I(B) \hookrightarrow B$ is a weak cat equivalence by definition. It follows that $\alpha$ is a weak cat equivalence. Because the class of monomorphisms that are weak cat equivalences is stable under pushouts, we conclude that $\beta$ is a weak cat equivalence.
\end{proof}

\begin{proposition}
Every inner anodyne map is a weak cat equivalence.
\end{proposition}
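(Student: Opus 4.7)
The plan is to reduce the statement to the generating case: it suffices to show that every inner horn inclusion $\Lambda \hookrightarrow A$ is a weak cat equivalence. Inner horns are monomorphisms, hence cofibrations in this Cisinski structure, so this would say they are trivial cofibrations. Since trivial cofibrations form a weakly saturated class (they are closed under pushouts, transfinite compositions and retracts), and since by the small object argument the class of inner anodyne maps is precisely the weak saturation of the set of inner horns, the reduction concludes the proof.

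To show an inner horn inclusion $\Lambda \hookrightarrow A$ is a weak cat equivalence, I would exploit the chain
$$I(A) \hookrightarrow \bigcup \frakF_A^o \hookrightarrow \Lambda \hookrightarrow A.$$
Note that for an inner horn of $A$ to exist at all, $A$ must possess an inner face, and hence $A \neq \DDelta^{1,\ldots,1}$ by the count of inner faces given earlier. The first lemma of this section then yields $I(A) \subset \bigcup \frakF_A^o$, so the first inclusion above is well defined and coincides with $I(A) \hookrightarrow I(A) \cup \bigcup \frakF_A^o$; by Conjecture \ref{spineconjecture2} it is a weak cat equivalence. The second inclusion is a weak cat equivalence by the preceding lemma, applied to the set $\Gamma := \frakF_A \setminus \{B\}$, where $B$ is the unique (inner) face missing from $\Lambda$; this $\Gamma$ is a proper subset of $\frakF_A$ and contains all outer faces, as required. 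Finally, the composite $I(A) \hookrightarrow A$ is a weak cat equivalence by the very definition of the model structure for cats, since spines are the generating trivial cofibrations.

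Composing the first two arrows shows that $I(A) \hookrightarrow \Lambda$ is a weak cat equivalence; two-out-of-three applied to $I(A) \hookrightarrow \Lambda \hookrightarrow A$ then forces $\Lambda \hookrightarrow A$ to be a weak cat equivalence as well, completing the reduction. There is no real obstacle beyond the assumed conjectures: once Conjecture \ref{spineconjecture2} is in hand, the argument is a purely formal combination of the previous lemma, the two-out-of-three property, and the weak saturation of trivial cofibrations.
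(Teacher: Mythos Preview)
Your argument is correct and is exactly the route the paper has set up: the preceding lemma together with Conjecture~\ref{spineconjecture2} and the spine inclusion $I(A)\hookrightarrow A$ force each inner horn to be a weak cat equivalence via two-out-of-three, and weak saturation then handles all inner anodyne maps. The paper leaves the proposition without proof precisely because this deduction is immediate from the results just established.
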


\begin{proposition}
The model structure for cats is equal to the Cisinski model structure generated by the set of inner horns. In particular, if an object $X \in \Set[\DDelta]$ is fibrant with respect to the model structure for cats, then it is a cat.
\end{proposition}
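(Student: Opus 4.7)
The plan is to use the universal characterization of a Cisinski model structure generated by a set $S$ of monomorphisms as being the minimal Cisinski model structure on $\Set[\DDelta]$ whose cofibrations are the monomorphisms and in which every map in $S$ is a trivial cofibration: for any other such structure $M'$ one has inclusions of trivial cofibrations and hence of weak equivalences. Since a Cisinski model structure (with cofibrations fixed as the monomorphisms) is determined by its class of trivial cofibrations, proving equality of the two model structures reduces to showing that each of the two generating sets sits inside the trivial cofibrations of the other.

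The two inclusions to verify are essentially set up by the preceding propositions. In one direction, the structure $M_{\mathrm{horn}}$ generated by inner horns contains every spine as a trivial cofibration: by the proposition that every spine $I(A) \subset A$ is inner anodyne, and since inner anodyne maps form the saturated class generated by the inner horns (immediate from the definitions of inner fibration and inner anodyne together with the small object argument), every spine lies in the saturation of the generators of $M_{\mathrm{horn}}$ and is therefore a trivial cofibration there. In the other direction, the model structure for cats contains every inner horn inclusion as a trivial cofibration: inner horns are tautologically inner anodyne, and by the proposition that every inner anodyne map is a weak cat equivalence, each inner horn inclusion is a monomorphism and a weak cat equivalence, hence a trivial cofibration in the model structure for cats.

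Invoking minimality in both directions forces the classes of trivial cofibrations, and therefore the two model structures themselves, to coincide. For the \emph{in particular} assertion, any object $X$ fibrant in the (common) structure has the right lifting property with respect to every trivial cofibration and, in particular, with respect to every inner horn inclusion $\Lambda \subset A$; this is precisely the defining condition for $X$ to be a cat. The only non-routine ingredient is the minimality characterization of the Cisinski-generated model structure, which is standard in Cisinski's framework; once this is granted, no substantial obstacle remains and the statement follows directly from the two preceding propositions.
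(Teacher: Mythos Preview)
Your argument is correct and is precisely the one the paper has in mind: the proposition is stated without proof because it is meant to follow formally from the two immediately preceding propositions (spines are inner anodyne; inner anodyne maps are weak cat equivalences) via the minimality property of Cisinski model structures, exactly as you spell out. There is nothing to add or correct.
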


\begin{conjecture}
The fibrant objects of the model structure for cats are precisely the cats.
\end{conjecture}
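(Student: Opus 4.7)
The preceding proposition already gives one direction: every fibrant object has the right lifting property against all inner horn inclusions and so is a cat. What remains is the converse, that every cat $\calC$ is fibrant. Following Joyal's strategy in the classical case, the natural target is a lifting theorem: for every trivial cofibration $\iota\colon A \hookrightarrow B$ in the model structure for cats, every map $A \to \calC$ extends along $\iota$. Granting this, the map $\calC \to *$ has the right lifting property against every trivial cofibration, and so $\calC$ is fibrant.

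The first ingredient is a theory of equivalences. One constructs an interval object $J \in \Set[\DDelta]$ modelling the free-living isomorphism and declares a $1$-simplex $f \in \calC(\DDelta^1)$ to be an \emph{equivalence} if the map $\DDelta^1 \to \calC$ classifying $f$ extends along $\DDelta^1 \hookrightarrow J$. Analogous definitions at each component $\DDelta^{1,\ldots,1}$ (with $k$ ones) pick out the correct notion of invertibility for each kind of higher cell. One must verify that these classes of equivalences are stable under composition, satisfy $2$-out-of-$3$, and behave well under the spine decomposition, so that the equivalences in $\calC$ can be detected level by level and assembled coherently.

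The second and decisive ingredient is a \emph{special outer horn theorem}: in a cat $\calC$, an outer horn $\Lambda^{k:m} \subset A$ (with $m \in \{0, a_k\}$) admits a filler provided the extremal edge of the $k$-th component, regarded as a $1$-simplex of $\calC$, is an equivalence in the sense above. Granted this, the full lifting theorem is proved by a transfinite filtration of $\iota$ through the generating cofibrations $\partial C \hookrightarrow C$: each stage is either an inner horn inclusion (handled by the cat hypothesis) or an outer horn inclusion whose extremal edge, because $\iota$ is a weak cat equivalence, is forced to be an equivalence in $\calC$.

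The main obstacle is the special outer horn theorem. Joyal's argument for $\Delta$ uses combinatorial surgery on simplices that does not transfer verbatim to $\DDelta$: the product-like structure of $\DDelta^{a_1,\ldots,a_d}$ obliges any workable notion of equivalence to be tracked componentwise, and compatibility of the fillers across components requires a delicate induction on $\dim A$ together with repeated use of the two conjectures already in force. The cleanest route is probably to first settle the theorem for $A$ of the form $\DDelta^{1,\ldots,1,a_k,1,\ldots,1}$ (effectively reducing to the classical Joyal theorem applied componentwise), then bootstrap to general $A$ using the spine description of the model structure, since the spine $I(A)$ is a particularly rigid skeleton on which equivalences can be checked one edge at a time.
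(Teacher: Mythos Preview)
The paper does not prove this statement: it is explicitly labelled a \emph{Conjecture} and no argument is given. So there is no ``paper's own proof'' to compare against---the author leaves this open and in fact ends the paper by soliciting a thesis advisor to work on precisely this circle of problems.

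Your proposal is not a proof either, and to your credit you do not pretend it is: you repeatedly write ``granting this'' and ``the main obstacle is'' and ``probably''. What you have written is a plausible research outline modelled on Joyal's argument for $\Delta$. That is reasonable as a starting point, but the gaps you name are genuine and substantial. In particular, the special outer horn theorem you identify as the crux is exactly the kind of result that fails to transfer mechanically from $\Delta$ to $\DDelta$, for the reasons you yourself sketch: the componentwise structure of $\DDelta^{a_1,\ldots,a_d}$ means that a single ``extremal edge'' condition in one component does not obviously control the filling problem across all components, and the inductive bootstrap you propose (from $\DDelta^{1,\ldots,1,a_k,1,\ldots,1}$ to general $A$ via spines) presupposes compatibility results that are themselves open---indeed, they sit downstream of the two conjectures the paper already assumes unproven. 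Your filtration argument in the penultimate paragraph is also too quick: a generic trivial cofibration does not decompose into horn inclusions whose outer stages automatically have invertible extremal edges; in the classical case this requires Joyal's characterisation of categorical equivalences via lifting against $\{0,1\}\hookrightarrow J$, and the analogue here is part of what must be established, not assumed.

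In short: the paper offers no proof, your outline names the right targets, but the targets remain open.
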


\section{$\mathbf{n}$-cats}

In this section, let $n \in \dbN$.

\begin{definition}
Define $\DDelta_n \subset \DDelta$ to be the full subcategory generated by the objects $A \in \DDelta$ with $\dim A \leq n$.
\end{definition}

\begin{proposition}
The sequence of canonical projection functors $\DDelta_0 \leftarrow \DDelta_1 \leftarrow \DDelta_2 \leftarrow \ldots \leftarrow \DDelta$ induces a sequence of (full and faithful) embeddings $\Set[\DDelta_0] \hookrightarrow \Set[\DDelta_1] \hookrightarrow \Set[\DDelta_2] \hookrightarrow \ldots \hookrightarrow \Set[\DDelta]$.
\end{proposition}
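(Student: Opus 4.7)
The plan is to identify each canonical projection $p_n \colon \DDelta_{n+1} \to \DDelta_n$ as a left adjoint to the inclusion $i_n \colon \DDelta_n \hookrightarrow \DDelta_{n+1}$, and to deduce that the induced restriction $p_n^*$ on presheaf categories is fully faithful by a direct naturality argument.

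I would define $p_n$ on objects by $\DDelta^{b_1,\ldots,b_{n+1}} \mapsto \DDelta^{b_1,\ldots,b_n}$ and on morphisms, using the $\sim$-equivalence-class description of $\mathrm{Hom}$-sets in $\DDelta$ as tuples $(f_1,\ldots,f_d)$, by truncating the tuple to its first $n$ entries (padding with a trivial constant on $\Delta^0$ when required); this yields a well-defined functor with $p_n \circ i_n = \mathrm{id}_{\DDelta_n}$. The combinatorial heart of the proof is then to establish the adjunction $p_n \dashv i_n$: for all $A \in \DDelta_n$ and $B \in \DDelta_{n+1}$ exhibit a natural bijection
\[
\mathrm{Hom}_{\DDelta_n}(p_n(B), A) \iso \mathrm{Hom}_{\DDelta_{n+1}}(B, i_n(A)).
\]
Since $A$ has all components past $\dim A \leq n$ equal to $\Delta^0$, every morphism into either $A$ or $i_n(A)$ has its components past the $n$-th forced to be constant maps into $\Delta^0$. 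Hence both sides admit the same tuple description, and the bijection is the identity on tuples. In particular, the counit $\epsilon \colon p_n i_n \Rightarrow \mathrm{id}_{\DDelta_n}$ is the identity natural transformation, and by the triangle identities the unit $\eta_B \colon B \to i_n p_n B$ satisfies $p_n(\eta_B) = \mathrm{id}_{p_n B}$.

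Given any natural transformation $\phi \colon p_n^* X \to p_n^* Y$ in $\Set[\DDelta_{n+1}]$, naturality applied to $\eta_B$ then forces $\phi_B = \phi_{i_n p_n B}$, so $\phi$ is completely determined by its restriction to the image of $i_n$, which is itself a natural transformation $X \to Y$ in $\Set[\DDelta_n]$; conversely, every $\psi \colon X \to Y$ extends by $B \mapsto \psi_{p_n B}$. These constructions are mutually inverse, proving $p_n^*$ fully faithful. (Equivalently, by uniqueness of adjoints one has $p_n^* \cong (i_n)_!$, and left Kan extension along a fully faithful functor is always fully faithful.) The same argument with $\DDelta_{n+1}$ replaced by $\DDelta$ handles the projection $\DDelta \to \DDelta_n$, yielding the full chain of embeddings. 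The main obstacle is the combinatorial verification of $p_n \dashv i_n$ in the $\sim$-equivalence-class description; everything else is formal.
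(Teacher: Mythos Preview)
Your argument is correct. The paper states this proposition without proof, so there is nothing to compare against; what you have written supplies a valid justification.

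A few remarks on the details. Your identification of the projection $p_n$ as left adjoint to the inclusion $i_n$ is the right structural observation: $\DDelta_n$ sits inside $\DDelta_{n+1}$ (and inside $\DDelta$) as a \emph{reflective} subcategory, with reflector $p_n$ and counit the identity. The verification that truncation is well-defined on $\sim$-classes and that $\mathrm{Hom}_{\DDelta_n}(p_n B, A) \cong \mathrm{Hom}_{\DDelta_{n+1}}(B, i_n A)$ goes through exactly as you describe, since any morphism into an object $A$ with $\dim A \le n$ has all components beyond the $n$-th forced to be the unique map to $\Delta^0$, and these do not affect the $\sim$-class. Your direct naturality argument showing that every $\phi \colon p_n^* X \to p_n^* Y$ satisfies $\phi_B = \phi_{i_n p_n B}$ (via $p_n(\eta_B) = \mathrm{id}$) is clean and self-contained.

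The parenthetical abstract argument is also sound: from $p_n \dashv i_n$ one obtains $p_n^* \dashv i_n^*$ on presheaf categories, and since $(i_n)_! \dashv i_n^*$ as well, uniqueness of left adjoints gives $p_n^* \cong (i_n)_!$; fully-faithfulness then follows from that of $i_n$. Either route suffices, and both are standard once the reflection is in hand.
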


\begin{definition}
$\calC \in \Set[\DDelta_n]$ is called an \emph{$n$-cat} if for every inner horn $\Lambda \subset A$ with $A \in \DDelta_n$, the map $\calC(A) \to \calC(\Lambda)$ is surjective.
\end{definition}

\begin{remark}
For a cat $\calC \in \Set[\DDelta]$, the restriction $\calC_{|\tiny \DDelta_n} \in \Set[\DDelta_n]$ is an $n$-cat.
\end{remark}

\begin{remark}
Beware that an $n$-cat $\calC \in \Set[\DDelta_n]$ is in general not a cat when viewed as an object of $\Set[\DDelta]$ via the embedding $\Set[\DDelta_n] \hookrightarrow \Set[\DDelta]$.
\end{remark}

\begin{definition}
A cat $\calC \in \Set[\DDelta]$ is called an \emph{$n$-cat} if $\calC$ is weakly cat equivalent to an object in $\Set[\DDelta_n]$ (viewed as an object in $\Set[\DDelta]$).
\end{definition}

From now on, assume $n \geq 1$.

\begin{definition}
The Cisinski model structure on $\Set[\DDelta_n]$ generated by the set of spines which are contained in $\Set[\DDelta_n]$ is called the \emph{model structure for $n$-cats}.
\end{definition}

\begin{proposition}
The model structure for $n$-cats is equal to the Cisinski model structure generated by the set of inner horns which are contained in $\Set[\DDelta_n]$. In particular, if an object $X \in \Set[\DDelta_n]$ is fibrant with respect to the model structure for $n$-cats, then it is an $n$-cat.
\end{proposition}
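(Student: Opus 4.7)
The plan is to replicate, inside $\Set[\DDelta_n]$, the argument that proved the analogous statement for the unrestricted model structure for cats. Two Cisinski model structures on $\Set[\DDelta_n]$ coincide if and only if their generating sets lie in a common localizer; so it suffices to check that (a) every spine $I(A) \hookrightarrow A$ with $A \in \DDelta_n$ is a trivial cofibration in the horn-generated model structure, and (b) every inner horn $\Lambda \hookrightarrow A$ with $A \in \DDelta_n$ is a trivial cofibration in the spine-generated model structure. Once (a) and (b) are in hand, the two localizers are equal, hence the two Cisinski model structures coincide, and the ``in particular'' clause is immediate from the definition of an $n$-cat together with the characterization of fibrant objects in a Cisinski model structure as those having the right lifting property against the generating monomorphisms.

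For (a), I would observe that the proof of the unrestricted proposition ``every spine $I(A) \subset A$ is inner anodyne'' ultimately reduces, via the first lemma in the previous section and its precursor ($\bigcup\Gamma \subset A$ is inner anodyne for $\Gamma \supseteq \frakF_A^o$ proper) together with the two conjectures, to an induction on $A$. The critical observation is that the induction only descends to faces of $A$, and $\dim B \leq \dim A \leq n$ for any face $B$ of $A \in \DDelta_n$, so the entire induction takes place inside $\DDelta_n$. Inner anodyne maps generated by the inner horns in $\Set[\DDelta_n]$ form a saturated class, closed under pushout and transfinite composition, so the inductive construction produces maps in this class. This gives (a), since inner anodyne maps are trivial cofibrations.

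For (b), the same reasoning applies to the statement ``every inner horn $\Lambda \hookrightarrow A$ is a weak cat equivalence in the spine-generated model structure.'' In the unrestricted setting this was deduced from conjecture \ref{spineconjecture2} combined with the lemma that $\bigcup\frakF_A^o \subset \bigcup\Gamma$ is a weak cat equivalence, giving ultimately that inner anodyne maps are weak cat equivalences. Once more the induction stays inside $\DDelta_n$ because faces preserve the dimension bound, and the class of monomorphisms that are weak $n$-cat equivalences is stable under pushouts and transfinite compositions; thus inner anodyne maps in $\Set[\DDelta_n]$ are weak $n$-cat equivalences, and in particular every inner horn in $\Set[\DDelta_n]$ is a trivial cofibration for the spine-generated structure.

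The main obstacle is purely bookkeeping: one must check that each induction in the supporting lemmas and (now assumed) conjectures remains inside $\Set[\DDelta_n]$ and does not require stepping up to higher-dimensional simplices. This is automatic because the dimension function on $\DDelta$ is non-increasing under taking faces, and every construction used in the unrestricted proofs (pushouts, unions of faces, intersections with a fixed face, transfinite composition) preserves membership in $\Set[\DDelta_n]$. With (a) and (b) established, the equality of model structures follows, and a fibrant object $X \in \Set[\DDelta_n]$ necessarily has the right lifting property against every inner horn contained in $\Set[\DDelta_n]$, i.e. $X$ is an $n$-cat.
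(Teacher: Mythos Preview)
The paper gives no explicit proof of this proposition; it is stated immediately after the analogous Section~5 proposition and is evidently meant to be proved by the same chain of (conditional) arguments, restricted to $\DDelta_n$. Your proposal spells out exactly this restriction argument, and the key observation---that the inductions in Lemma~5.4 and Lemma~5.8 only pass to faces, which satisfy $\dim B \leq \dim A$ by Lemma~3.2---is correct, so your approach matches the paper's implicit one.
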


\begin{conjecture}
The fibrant objects of the model structure for $n$-cats are precisely the $n$-cats.
\end{conjecture}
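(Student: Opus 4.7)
The forward direction ``fibrant $\Rightarrow$ $n$-cat'' is the content of the preceding proposition, so what remains is the converse: every $n$-cat $\calC \in \Set[\DDelta_n]$ is fibrant in the model structure for $n$-cats. The plan is to mirror Joyal's argument \cite{joyal.ttoqcaia} that quasi-categories are exactly the fibrant objects of the Joyal model structure on $\Set[\Delta]$.

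Since this is a combinatorial Cisinski model structure, the class of trivial cofibrations is weakly saturated and admits a small generating set, and the task reduces to producing such a set $\calI$ and checking that every $n$-cat has the right lifting property against each element of $\calI$. The inner horns in $\DDelta_n$ lie in this class (they are inner anodyne, hence weak cat equivalences by the preceding propositions), and RLP against them is immediate from the definition of an $n$-cat. However, the class of trivial cofibrations is in general strictly larger than the saturated closure of the inner anodyne maps, because the Cisinski construction also incorporates cylinder data. One therefore needs further generators, and their natural candidates are ``walking equivalence'' maps: for each $A \in \DDelta_n$ with $\dim A = n$, an inclusion $\DDelta^\emptyset \hookrightarrow \calJ_A$, where $\calJ_A$ is a free $n$-cat on an invertible top-dimensional $A$-morphism, analogous to the nerve of the walking isomorphism in simplicial sets.

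Carrying this out requires, first, a theory of equivalences in $n$-cats (what it means for a top-dimensional morphism to be invertible, how such invertibility propagates), second, a construction of each $\calJ_A$ compatible with this theory, and third, the verification that the family $\{\DDelta^\emptyset \hookrightarrow \calJ_A\}$, together with the inner horns, realizes exactly the Bousfield-localization data built into the Cisinski construction. Granted all of this, one concludes by checking that an $n$-cat $\calC$ has the right lifting property against every $\DDelta^\emptyset \hookrightarrow \calJ_A$: this is the assertion that every vertex of $\calC$ extends to an invertible $A$-morphism on the nose, and is the combinatorial heart of the argument, paralleling Joyal's lifting theorem for invertible $1$-morphisms in quasi-categories. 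This last verification is the main obstacle, and is of essentially the same depth as the analogous conjecture for cats stated a few pages earlier; indeed, once that conjecture is proven, it may be possible to deduce the $n$-cat case via the Quillen adjunction induced by the canonical projection $\DDelta \to \DDelta_n$, transferring fibrancy along the right-Quillen embedding $\pi^* \colon \Set[\DDelta_n] \hookrightarrow \Set[\DDelta]$.
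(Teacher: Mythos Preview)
The paper does not prove this statement: it is listed as a \emph{Conjecture} with no accompanying argument, exactly like the parallel statement for cats a few lines earlier. There is therefore no proof in the paper to compare your proposal against. Your proposal is likewise not a proof but a strategy outline, and you are candid about this---you identify the ``main obstacle'' (the lifting verification against walking-equivalence inclusions) as being ``of essentially the same depth'' as the still-open conjecture for cats. As a plan of attack modeled on Joyal's argument for quasi-categories this is reasonable in spirit, but it does not close the gap; in particular, the construction of the objects $\calJ_A$, the proof that the inner horns together with $\{\DDelta^\emptyset \hookrightarrow \calJ_A\}$ generate the trivial cofibrations of the Cisinski localization, and the combinatorial lifting theorem are all left as assertions.

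One concrete issue with your final suggestion deserves flagging: the paper explicitly warns (in the remark following the definition of $n$-cat in $\Set[\DDelta_n]$) that an $n$-cat $\calC$ is in general \emph{not} a cat when viewed in $\Set[\DDelta]$ via the embedding $\pi^*$. Hence even granting the full conjecture that cats are exactly the fibrant objects of $\Set[\DDelta]$, you cannot conclude that $\pi^*\calC$ is fibrant, and the proposed transfer of fibrancy along the right-Quillen embedding does not go through as stated. Any reduction to the cat case would have to pass through a fibrant replacement of $\pi^*\calC$ in $\Set[\DDelta]$ and then argue that fibrancy reflects back down to $\Set[\DDelta_n]$, which is additional, nontrivial work.
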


\section{The model structure for groupoids}

\begin{definition}
The Cisinski model structure on $\Set[\DDelta]$ generated by the set of maps $A \to 1$, with $A \in \DDelta$, is called the \emph{model structure for groupoids}. The weak equivalences of this model structure are called \emph{weak groupoid equivalences} or \emph{weak homotopy equivalences}.
\end{definition}

\begin{conjecture}
The model structure for groupoids is equal to the Cisinski model structure generated by the set of horns. In particular, if an object $X \in \Set[\DDelta]$ is fibrant with respect to the model structure for groupoids, then it is a groupoid.
\end{conjecture}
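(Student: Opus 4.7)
The second assertion is a formal consequence of the first: once the model structure for groupoids is identified with the Cisinski model structure generated by the set of horn inclusions, its fibrant objects are by definition those $X \in \Set[\DDelta]$ with the right lifting property against the saturated class of horns, i.e., the groupoids. So the real content is the equality of the two Cisinski model structures on $\Set[\DDelta]$. Both share the same cofibrations (the monomorphisms), so equality reduces to equality of their localizers, and since each localizer is the smallest containing its given generators, it suffices to verify that (a) every horn inclusion $\Lambda \subset A$ is a weak homotopy equivalence, and (b) every terminal map $A \to 1$ with $A \in \DDelta$ is a weak equivalence for the horn-generated model structure.

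Part (a) I would handle by an induction on $A$ running exactly parallel to the weak-cat-equivalence lemma earlier in the paper, but with $\frakF_A^i$ replaced by $\frakF_A$ and with horns in place of inner horns. More generally, for every proper $\Gamma \subsetneq \frakF_A$ the inclusion $\bigcup \Gamma \subset A$ is a weak homotopy equivalence: the same pushout presentation over $\bigcup_{F \in \Gamma}(F \cap B)$, combined with the inductive hypothesis on $B$ and the fact that each face $F$ has $F \to 1$ a weak homotopy equivalence by definition, gives the inductive step via two-out-of-three.

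Part (b) is the nontrivial step. The plan is to show by induction on $A \in \DDelta$ that some chosen vertex inclusion $v_A \colon \DDelta^\emptyset \hookrightarrow A$ is horn-anodyne; then $(A \to 1) \circ v_A = \mathrm{id}_{\DDelta^\emptyset}$ and two-out-of-three yield that $A \to 1$ is a weak equivalence in the horn-generated model structure. Classically in $\Delta$ such a vertex inclusion in $\Delta^n$ is anodyne by virtue of the cone decomposition $\Delta^n \cong \Delta^{n-1} \star \Delta^0$; the corresponding task in $\DDelta$ is to produce an explicit filtration $\DDelta^\emptyset = A^{(0)} \hookrightarrow A^{(1)} \hookrightarrow \cdots \hookrightarrow A^{(N)} = A$ in which each step is a pushout of a horn inclusion $\Lambda^{k:m} \subset A'$ for some intermediate $A' \in \DDelta$. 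I would attempt this by induction on $d$ and, within fixed $d$, on $\max_k a_k$, mirroring the classical join-based argument while adding cells one component at a time.

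The main obstacle is precisely this filtration construction in part (b): one must check at each step that the attached subobject is literally of the form $\Lambda^{k:m} \subset A'$ in an intermediate $A' \in \DDelta$, rather than merely a union of faces that happens to sit inside $A$. The multi-component product structure of $\DDelta^{a_1,\ldots,a_d}$ makes this bookkeeping delicate, and it is where the argument departs most seriously from its classical analogue.
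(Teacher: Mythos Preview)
The statement you are addressing is stated in the paper as a \textbf{Conjecture}, not a theorem; the paper offers no proof for it at all. There is therefore nothing on the paper's side to compare your proposal against. What you have written is not a proof but an outline of a possible strategy, and you acknowledge as much: your part (b) ends by identifying the construction of an explicit horn-anodyne filtration of a vertex inclusion $\DDelta^\emptyset \hookrightarrow A$ in $\DDelta$ as the ``main obstacle'', and you do not carry it out. That is precisely the kind of combinatorial difficulty the paper leaves open, which is presumably why the author records the statement as a conjecture.

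Your part (a) also hides more than you let on. The earlier lemma you mimic crucially uses the hypothesis that $\Gamma$ contain all outer faces: this is what guarantees, after intersecting with a face $B$, that $\{F \cap B : F \in \Gamma\}$ is again a \emph{proper} subset of $\frakF_B$ (containing all outer faces of $B$), so that the induction hypothesis applies. Once you drop the outer-face condition and allow arbitrary nonempty proper $\Gamma$, you must separately verify that the map $F \mapsto F \cap B$ from $\frakF_A \setminus \{B\}$ to $\frakF_B$ is well-defined and injective in $\DDelta$ (so that omitting a second face $C \neq B$ from $\Gamma$ genuinely omits a face of $B$), and this requires engaging with the face combinatorics of $\DDelta^{a_1,\ldots,a_d}$, which are more delicate than in $\Delta$ (for instance, faces in different components interact differently from faces in the same component, and faces with $a_k=1$ behave specially). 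None of this is addressed in your sketch. In short, your reduction to (a) and (b) is the right shape, but both halves remain genuinely open, consistent with the conjectural status of the statement in the paper.
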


\begin{conjecture}
The fibrant objects of the model structure for groupoids are precisely the groupoids.
\end{conjecture}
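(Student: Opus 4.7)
\emph{Proof plan.} The previous conjecture (assumed throughout this section) yields that the model structure for groupoids coincides with the Cisinski model structure generated by the set $\calH$ of all horn inclusions $\Lambda \subset A$, and that every fibrant object is a groupoid. It therefore remains to prove the converse: every groupoid $\calG \in \Set[\DDelta]$ is fibrant.

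The plan parallels the classical proof that Kan complexes are fibrant in $\Set[\Delta]$. In a Cisinski model structure generated by a set $\calS$ together with a functorial cylinder $X \mapsto X \otimes \DDelta^1$, the class of trivial cofibrations is the smallest saturated class containing $\calS$ and all cylinder pushout-products $(Y \otimes \partial\DDelta^1) \cup (X \otimes \DDelta^1) \hookrightarrow Y \otimes \DDelta^1$ for monomorphisms $X \hookrightarrow Y$. Thus, with $\calS = \calH$, the problem reduces to showing that every such cylinder pushout-product lies in the saturated class $\overline{\calH}$ generated by $\calH$. By the earlier proposition that monomorphisms are generated as a saturated class by the boundaries $\partial A \hookrightarrow A$, it further suffices to treat the universal case: for each $A \in \DDelta$,
\[
(A \otimes \partial\DDelta^1) \cup (\partial A \otimes \DDelta^1) \hookrightarrow A \otimes \DDelta^1
\]
should lie in $\overline{\calH}$.

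The key combinatorial step is then to produce, for each $A \in \DDelta$, an explicit filtration of $A \otimes \DDelta^1$ starting from the above pushout and adding non-degenerate cells one at a time, each attachment being a horn lying in $\overline{\calH}$. For the classical case $A = \DDelta^n$ this is just the Moore shuffle decomposition of $\Delta^n \times \Delta^1$. For a general $A = \DDelta^{a_1,\ldots,a_d}$ I would argue by induction on the pair $(d, \sum_i a_i)$, decomposing $A \otimes \DDelta^1$ into strata indexed by the cells that cross the cylinder, and using the lemma already proved (``$\bigcup \Gamma \subset A$ is inner anodyne whenever $\Gamma \subsetneq \frakF_A$ contains all outer faces'') together with its obvious strengthening to full-horn-anodyneness (both inner and outer horns are now permitted) to handle the transitions between strata.

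The main obstacle is precisely this explicit shuffle decomposition in $\DDelta$. The iterated cellular structure of $\DDelta^{a_1,\ldots,a_d}$ makes the combinatorics genuinely richer than in the classical case, and each attached cell must be verified to be an honest horn with a unique missing face. A possibly cleaner alternative would exploit that $\DDelta$ is a strict test category (Ara--Maltsiniotis): one expects the model structure for groupoids to be Quillen equivalent to the Kan--Quillen model structure on $\Set[\Delta]$ via the test-category machinery, whence fibrancy of $\calG$ would reduce to fibrancy of its image as a Kan complex. Making this precise still requires matching the notion of ``groupoid'' in $\Set[\DDelta]$ with ``Kan complex'' in $\Set[\Delta]$ through the comparison functors, which is itself a non-trivial identification.
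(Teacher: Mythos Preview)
The paper does not prove this statement: it is stated as a \emph{Conjecture}, with no proof and no sketch. There is therefore nothing to compare your proposal against; you have written a strategy for an open problem, not a reconstruction of an existing argument.

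As a strategy your reduction is the natural one, but be aware of two genuine gaps. First, you invoke the previous conjecture (that the groupoid model structure equals the Cisinski model structure generated by all horns) as if it were established; in the paper that is itself only a conjecture, so your argument is conditional on an unproven statement on top of the one you are trying to prove. Second, and more seriously, the heart of your plan is the claim that for every $A \in \DDelta$ the pushout-product $(A \otimes \partial\DDelta^1) \cup (\partial A \otimes \DDelta^1) \hookrightarrow A \otimes \DDelta^1$ lies in the saturated class generated by horns. You correctly identify this as ``the main obstacle'' and do not carry it out. The classical prism decomposition for $\Delta^n \times \Delta^1$ does not transport directly: in $\DDelta$ the cylinder is computed in presheaves, not in $\DDelta$ itself, and the non-degenerate cells of $A \times \DDelta^1$ are indexed by objects of $\DDelta$ of varying shapes, not just by shuffles. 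The inductive scheme you gesture at (on $(d,\sum_i a_i)$) and the appeal to the inner-anodyne lemma of the paper do not obviously combine into a filtration by horn pushouts; in particular, the lemma you cite concerns unions of faces inside a \emph{single} simplex $A$, whereas the cells you need to attach live in $A \times \DDelta^1$, which is not representable.

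Your alternative route via the strict test category theorem is more promising in spirit, but note that Cisinski's machinery gives a Quillen equivalence at the level of \emph{model structures}, not an identification of fibrant objects: knowing that $\Set[\DDelta]$ with the groupoid model structure is Quillen equivalent to $\Set[\Delta]$ with the Kan--Quillen structure does not by itself tell you which objects of $\Set[\DDelta]$ are fibrant. That identification is exactly the content of the conjecture.
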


\section{$\mathbf{H^2(G;A)}$}

\begin{definition}
For a group $G$, define $\bfB^1 G$ to be the strict 1-groupoid which has a single object, whose 1-morphisms are in bijective correspondence with the elements of $G$, and whose composition of 1-morphisms corresponds to multiplication in $G$.
\end{definition}

\begin{definition}
For an abelian group $A$, define $\rmB^2 A$ to be the strict 2-cat which has a single object, a single 1-morphism, whose 2-morphisms are in bijective correspondence with the elements of $A$, and whose (vertical and horizontal) composition of 2-morphisms corresponds to addition in $A$.
\end{definition}

\begin{remark}
In general, $\rmB^2 A$ is not a groupoid. In fact, the map $(\rmB^2 A)(\DDelta^{2,1}) \to (\rmB^2 A)(\Lambda^{2:0})$ induced by the outer horn $\Lambda^{2:0} \subset \DDelta^{2,1}$ is not surjective if $A \neq 0$.
\end{remark}

\begin{definition}
We can define a groupoid $\bfB^2 A$ having no non-degenerate simplices of dimension $>2$ together with a weak homotopy equivalence $\rmB^2 A \hookrightarrow \bfB^2 A$. (Hint: $\bfB^2 A(\DDelta^2) := A$.)
\end{definition}

\begin{proposition}
There is a canonical bijective correspondence between the set of maps $\bfB^1 G \to \bfB^2 A$ and the set of 2-cocycles $G \times G \to A$.
\end{proposition}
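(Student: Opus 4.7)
The plan is to analyze a map $f \colon \bfB^1 G \to \bfB^2 A$ simplex by simplex, showing that all of its content is concentrated in dimension $2$. The low-dimensional data is forced: both $\bfB^1 G$ and $\bfB^2 A$ have a unique $0$-cell, so $f_{\DDelta^\emptyset}$ carries no information, and $\bfB^2 A$ inherits a unique $1$-cell from $\rmB^2 A$, so every element of $G = \bfB^1 G(\DDelta^1)$ must map to that unique $1$-cell. Evaluation on $\DDelta^{1,1}$ likewise carries nothing new: in $\bfB^1 G$ all parallel $1$-cells coincide and the only $2$-cells between them are identities.

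The content is in dimension $2$. The set $\bfB^1 G(\DDelta^2)$ is in natural bijection with $G \times G$, parametrizing composable pairs $(g_1, g_2)$ with composite $g_1 g_2$, while $\bfB^2 A(\DDelta^2) = A$ by hypothesis; hence $f$ yields a function $c \colon G \times G \to A$. The cocycle identity falls out of compatibility on $\DDelta^3$: a $3$-simplex of $\bfB^1 G$ is a triple of composable morphisms $(g_1, g_2, g_3)$, whose four $2$-faces carry the pairs $(g_1, g_2)$, $(g_2, g_3)$, $(g_1, g_2 g_3)$ and $(g_1 g_2, g_3)$. The image in $\bfB^2 A$ is a (necessarily degenerate) $3$-cell whose boundary assembles these four elements of $A$ via the vertical and horizontal composition law for $2$-cells in $\rmB^2 A$, which is addition. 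Writing this boundary relation out yields precisely
\[ c(g_1, g_2) + c(g_1 g_2, g_3) = c(g_1, g_2 g_3) + c(g_2, g_3). \]

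Conversely, given a $2$-cocycle $c \colon G \times G \to A$, I would define $f_{\DDelta^2}$ by the formula above and extend to higher cells using the fact that $\bfB^2 A$ has no non-degenerate simplices of dimension $>2$: every $f_{\DDelta^{a_1,\ldots,a_d}}$ is then forced by its $2$-dimensional sub-data, and the cocycle condition is exactly what is needed for these forced values to be well defined and compatible with restriction along every morphism of $\DDelta$. In dimension $1$ and below one appeals to the absence of choice noted in the first paragraph, so the construction $c \mapsto f$ is inverse to $f \mapsto c$.

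The main obstacle is this final compatibility check. Concretely, for each $A = \DDelta^{a_1,\ldots,a_d}$ one must enumerate the $2$-dimensional sub-simplices, use the strictness of $\rmB^2 A$ (in particular the interchange law) to express the forced value of $f_A$ as a sum of the relevant values of $c$, and verify that every relation induced by a morphism of $\DDelta$ reduces either to a degeneracy or to an instance of the $\DDelta^3$-cocycle identity. Once this bookkeeping is complete, naturality of both constructions is automatic and the bijection is canonical.
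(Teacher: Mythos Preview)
The paper gives no proof of this proposition; it is stated bare and immediately followed by a conjecture. There is therefore nothing to compare your argument against, and your sketch is in fact more than the paper offers.

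On its own merits, your outline is the right one, but two points deserve attention. First, you omit the normalization condition: the $2$-simplices of $\bfB^1 G$ indexed by pairs $(e,g)$ or $(g,e)$ are degenerate (images of $1$-simplices under the degeneracy maps of $\DDelta$), so any presheaf map must send them to the degenerate $2$-simplices of $\bfB^2 A$, i.e.\ to $0 \in A$. Thus the bijection is really with \emph{normalized} $2$-cocycles; whether the paper intends this convention is not made explicit. Second, you correctly flag the ``main obstacle'' --- verifying that the cocycle condition suffices for a coherent extension to all of $\DDelta$ --- but you do not carry it out. Since $\bfB^2 A$ is only specified by a hint in the paper, and since the relevant simplices include mixed shapes like $\DDelta^{2,1}$ and $\DDelta^{1,2}$ (not just $\DDelta^3$), this step is where the real work lies. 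As written, your proposal is an honest sketch rather than a proof.
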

\begin{conjecture}
The bijective correspondence of the previous proposition descends to a bijective correspondence between the set of homotopy equivalence classes of maps $\bfB^1 G \to \bfB^2 A$ and the set $H^2(G;A)$ of 2-cocycles modulo 2-coboundaries.
\end{conjecture}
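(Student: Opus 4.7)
The plan is to model the homotopy relation on maps $\bfB^1 G \to \bfB^2 A$ explicitly and show that it becomes the coboundary relation under the bijection of the preceding proposition. I work in the model structure for groupoids, in which $\bfB^1 G$ and $\bfB^2 A$ are both groupoids, hence fibrant under the outstanding conjectures; all objects are cofibrant since cofibrations are the monomorphisms. Homotopy classes of maps are therefore computed via any cylinder object on the source.

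I would take the cylinder $\bfB^1 G \sqcup \bfB^1 G \hookrightarrow C \stackrel{\sim}{\twoheadrightarrow} \bfB^1 G$ given by $C := \bfB^1 G \times J$, where $J$ is the nerve of the walking isomorphism (the $1$-groupoid with two objects joined by a unique isomorphism). Its non-degenerate cells lying over the nontrivial $1$-cell of $J$ are indexed in low dimensions by one $\DDelta^{1,1}$-cell for each $g \in G$ and one $\DDelta^{2,1}$-cell for each pair $(g_1,g_2) \in G \times G$, the latter encoding compatibility with composition.

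A homotopy $H \colon C \to \bfB^2 A$ restricts on the two endpoints to maps $f_0, f_1$ classifying $2$-cocycles $c_0, c_1$. On each new $2$-cell $H$ specifies an element $h(g) \in A$, defining a function $h \colon G \to A$. The boundary condition on each new $3$-cell, computed in $\rmB^2 A$ via its strict vertical and horizontal composition of $2$-morphisms, reads precisely $c_1(g_1, g_2) - c_0(g_1, g_2) = h(g_1) + h(g_2) - h(g_1 g_2)$, that is $c_1 - c_0 = \delta h$; because $\rmB^2 A$ is $2$-strict, no further data or constraint appears in higher dimensions, and conversely any such $h$ extends uniquely to a valid $H$.

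The main obstacle is the careful combinatorial analysis of $\bfB^1 G \times J$ in $\Set[\DDelta]$: verifying the list of non-degenerate cells in dimensions $\leq 3$, checking that their boundary maps into $\bfB^2 A$ realize the claimed additive combinations of $h$- and $c$-values, and confirming that $2$-strictness yields uniqueness in higher dimensions. A secondary subtlety is that the argument implicitly invokes the outstanding conjectures identifying fibrant objects with groupoids, without which the left-homotopy relation need not be transitive. Once these points are settled, the correspondence from homotopies to coboundary identities descends to the desired bijection between $[\bfB^1 G, \bfB^2 A]$ and $Z^2(G;A)/B^2(G;A) = H^2(G;A)$.
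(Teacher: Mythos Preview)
The paper does not prove this statement: it is explicitly labeled a \emph{Conjecture} and left open, so there is no argument in the paper to compare your proposal against.

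On the merits of your outline: the overall strategy---choose a cylinder on $\bfB^1 G$, unwind what a homotopy into $\bfB^2 A$ records, and identify this with the coboundary relation---is the natural one. However, your description of the non-degenerate cells of $C=\bfB^1 G \times J$ is incorrect. Both $\bfB^1 G$ and $J$ lie in the essential image of the fully faithful embedding $\Set[\DDelta_1]\hookrightarrow\Set[\DDelta]$ induced by the projection $\DDelta\to\DDelta_1\cong\Delta$, and this embedding (being precomposition with a functor) preserves products. Hence $C$ is again in that image, and in particular has \emph{no} non-degenerate $\DDelta^{1,1}$- or $\DDelta^{2,1}$-cells: every non-degenerate cell of $C$ has shape $\DDelta^n$ for some $n$. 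The bookkeeping you need is therefore that of the ordinary simplicial product $N(G)\times J$, with the relevant data carried by $\DDelta^2$-cells (landing in $\bfB^2 A(\DDelta^2)=A$) and the relevant constraints imposed by $\DDelta^3$-cells, not by $\DDelta^{2,1}$-cells. You should also make precise which cells of $\bfB^2 A$ are available in shapes $\DDelta^n$ for $n\geq 3$, since the paper's ``no non-degenerate simplices of dimension $>2$'' refers to $\dim$ in the sense of $\DDelta$ (so it constrains shapes $\DDelta^{a_1,a_2,a_3,\ldots}$, not $\DDelta^n$). Finally, as you note, the argument presupposes the conjecture that fibrant objects coincide with groupoids; without it one does not know that a single cylinder computes the correct homotopy relation.
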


\section{Personal note}

I would like to write a thesis about this topic, but haven't been able to find an advisor yet. If there is anyone willing to work with me on this, please let me know.

\bigskip
\begin{center}\includegraphics[width=4cm]{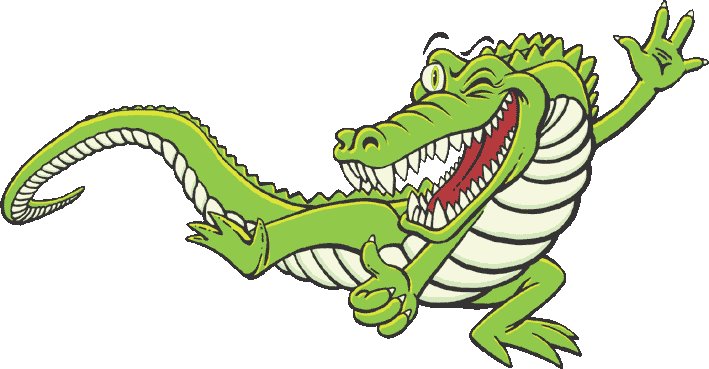}\end{center}

\begin{thebibliography}{99}

\bibitem{} Baez, J. \& Bartels, T. \& Corfield, D. \& Coward, A. \& Roberts, D. \& Schreiber, U. \& Shulman, M. \& Stacey, A. \& Trimble, T. \& \ldots. \emph{The $n$Lab} (\url{http://ncatlab.org}). 2008. 
\bibitem{boardmanvogt} Boardman, J.\,M. \& Vogt, R.\,M. \emph{Homotopy Invariant Algebraic Structures on Topological Spaces}. 1973.
\bibitem{cisinski} Cisinski, D.\,-\,C. \emph{Les pr\'efaisceaux comme mod\`eles des types d'homotopie}. 2006.
\bibitem{grothendieck.ps} Grothendieck, A. \emph{Pursuing Stacks}. 1983.
\bibitem{} Hirschowitz, A. \& Simpson, C.\,T. \emph{Descente pour les $n$-champs}. 1998.
\bibitem{joyal.qcakc} Joyal, A. \emph{Quasi-categories and Kan complexes}. 2002.
\bibitem{joyal.qcvsss} Joyal, A. \& Tierney, M. \emph{Quasi-categories vs Segal spaces}. 2006.
\bibitem{joyal.noqc} Joyal, A. \emph{Notes on Quasi-Categories}. 2008.
\bibitem{joyal.ttoqcaia} Joyal, A. \emph{The Theory of Quasi-Categories and its Applications}. 2008.
\bibitem{lurie.htt} Lurie, J. \emph{Higher Topos Theory}. 2006.
\bibitem{lurie.ha} Lurie, J. \emph{Higher Algebra}. 2011.
\bibitem{simpson.acmsfncihnsagsvk} Simpson, C.\,T. \emph{A closed model structure for $n$-categories, internal $\underline{Hom}$, $n$-stacks and generalized Seifert-Van Kampen}. 1997.
\bibitem{} Simpson, C.\,T. \emph{Homotopy Theory of Higher Categories}. 2010.
\bibitem{} Tamsamani, Z. \emph{Sur des notions de $n$-cat\'egorie et $n$-groupo\"ide non-strictes via des ensembles multi-simpliciaux}. 1995.
\bibitem{} Tamsamani, Z. \emph{Equivalence de la th\'eorie homotopique des $n$-groupo\"ides et celle des espaces topologiques $n$-tronqu\'es}. 1996.
\end{thebibliography}
\end{document}